\documentclass[hidelinks,onefignum,onetabnum]{siamart250211}




\usepackage[english]{babel}
\usepackage{color}
\usepackage{amsmath}
\usepackage{amsfonts}
\usepackage{amssymb}
\usepackage{graphicx}
\usepackage{algorithmic}
\usepackage{comment}
\usepackage[normalem]{ulem}
\usepackage{multirow}
\usepackage{booktabs}
\usepackage{indentfirst}
\usepackage{enumitem}
\usepackage{todonotes}


\headers{Preconditioned Truncated Single Sample Estimators}{T. Xu, D. Cai, H. Huang, E. Chow, Y. Xi} 

\title{Preconditioned Truncated Single-Sample Estimators for Scalable Stochastic Optimization}

\author{Tianshi Xu\thanks{Department of Mathematics, Emory University, Atlanta, 30322 (\email{tianshi.xu,yxi26@emory.edu})} The research of Y. Xi is supported by
NSF awards DMS-2338904.
\and Difeng Cai\thanks{Department of Mathematics, Southern Methodist University, Dallas, TX 75205 (\email{ddcai@smu.edu})}
\and Hua Huang\thanks{School of Computational Science and Engineering, Georgia Institute of Technology, Atlanta, GA 30332 (\email{huangh223,echow@cc.gatech.edu})} The research of H. Huang and E. Chow is
supported by NSF award OAC 2003683.
\and Edmond Chow\footnotemark[3]
\and Yuanzhe Xi\footnotemark[1]
}

\usepackage{amsopn}


\ifpdf\hypersetup{
  pdftitle={Bias-Reduced Preconditioned GP},
  pdfauthor={Authors}
}
\fi

\newcommand{\zbf}{\mathbf{z}}
\newcommand{\cdphiss}{\widetilde{\Phi}_{tss}}
\newcommand{\cdx}{\mathbf{x}} 
\newcommand{\cdq}{Q} 
\newcommand{\imin}{i_{\min}}
\newcommand{\imax}{i_{\max}}
\newcommand{\eebb}{\mathbb{E}}
\newcommand{\vvbb}{\mathrm{Var}}
\newcommand{\Tjz}[1]{\mathbf{T}^{(\mathbf{z})}_{#1}}


\newcommand{\cdf}[1]{{#1}}



\begin{document}

\maketitle

\begin{abstract}
Many large-scale stochastic optimization algorithms involve repeated solutions of linear systems or evaluations of log-determinants. In these algorithms, computing exact solutions is often unnecessary; it is often computationally more efficient to construct unbiased stochastic estimators with controlled variance. However, classical iterative solvers incur truncation bias, whereas unbiased Krylov-based estimators typically exhibit high variance and potential numerical instability. To mitigate these issues, we introduce the Preconditioned Truncated Single-Sample (PTSS) estimators---a family of stochastic Krylov methods that integrate preconditioning with truncated Lanczos iterations. PTSS yields low-variance, stable estimators for linear system solutions, log-determinants, and their derivatives. We establish theoretical results for their mean, variance, and concentration properties, explicitly quantifying the variance reduction induced by preconditioning. Numerical experiments confirm that PTSS achieves superior stability and variance control compared with existing unbiased and biased alternatives, providing an efficient and practical framework for stochastic optimization.
\end{abstract}

\begin{keywords} 
Preconditioning, Krylov subspace methods, stochastic optimization, trace estimation, Gaussian processes
\end{keywords}

\begin{AMS}
   	60G15, 65F08, 65F10, 68W25
\end{AMS}

\section{Introduction}\label{sec:intro}
Large-scale stochastic optimization problems are central to diverse applications, 
including variational inference, Bayesian experimental design, probabilistic machine learning, 
and hyperparameter selection~\cite{Blei03042017,Murphy_2023,scott2025designing,hyperop}. 
In many of these settings, the primary computational bottleneck is the repeated solution of large linear systems 
or the evaluation of log-determinants, which often dominate the overall cost of objective 
and gradient computations.

A motivating example is Gaussian Process (GP) modeling, a non-parametric Bayesian framework widely used for regression and classification in science and engineering applications~\cite{Rasmussen_Williams_2005,Murphy_2022,Murphy_2023,Huang2026}. Given any training data $\mathbf{X}\in\mathbb{R}^{n_1\times d}$, noisy training observation $\mathbf{y}\in\mathbb{R}^{n_1}$, and testing data $\mathbf{X}_\ast\in\mathbb{R}^{n_2\times d}$, a standard GP model assumes that the noise-free testing observation $\mathbf{y}_\ast\in\mathbb{R}^{n_2}$ follows the joint distribution \begin{equation} \begin{bmatrix} \mathbf{y} \\ \mathbf{y}_\ast \end{bmatrix} \sim \mathcal{N} \begin{pmatrix} \mathbf{0}, f^2 \begin{bmatrix} \kappa(\mathbf{X},\mathbf{X})+\mu\mathbf{I} & \kappa(\mathbf{X},\mathbf{X}_{\ast})\\ \kappa(\mathbf{X}_{\ast},\mathbf{X}) & \kappa(\mathbf{X}_{\ast},\mathbf{X}_{\ast}) \end{bmatrix} \end{pmatrix}. \end{equation} 
Here $f$ and $\mu$ are scalar hyperparameters, $\mathbf{I}$ is the identity matrix, $\kappa:\mathbb{R}^d\times\mathbb{R}^d\to\mathbb{R}$ is a kernel function, and $\kappa(\mathbf{U},\mathbf{V})$ denotes a kernel matrix with entries
\[
[\kappa(\mathbf{U},\mathbf{V})]_{ij} \;=\; \kappa(\mathbf{u}_i,\mathbf{v}_j),
\]
where $\mathbf{u}_i$, $\mathbf{v}_j$ denote $i$-th and $j$-th rows of $\mathbf{U}$ and $\mathbf{V}$, respectively. Common kernel functions include the Gaussian (RBF), Mat\'ern, and exponential kernels, each depending on one or more hyperparameters. For instance, the RBF kernel$$\kappa(\mathbf{x},\mathbf{y})=\exp(-||\mathbf{x}-\mathbf{y}||_2^2/2l^2)$$ depends on a length-scale parameter $l$.

The quality of a GP model depends on both the choice of the kernel~\cite{duvenaud2013structure} and its associated hyperparameters~\cite{pmlr-v139-potapczynski21a, pmlr-v162-wenger22a}. Assuming the kernel function is fixed, the remaining task is to estimate hyperparameters by minimizing the negative log marginal likelihood (NLML):
\begin{equation}\label{eq:loss}
L(\Theta) \;=\; \frac{1}{2}\left(\mathbf{y}^{\top}\widehat{\mathbf{K}}^{-1}\mathbf{y} 
+ \log|\widehat{\mathbf{K}}| + n_1\log 2\pi\right),
\end{equation}
where $\widehat{\mathbf{K}} = K(\mathbf{X},\mathbf{X}) + \mu\mathbf{I}$ and $\Theta = (\mu, f, l)$ denotes the hyperparameter set. The derivatives of NLML are given by
\begin{equation}\label{eq:loss_grad}
\frac{\partial L}{\partial \theta}
\;=\;
-\frac{1}{2}\!\left(
\mathbf{y}^{\top}\widehat{\mathbf{K}}^{-1}
\frac{\partial \widehat{\mathbf{K}}}{\partial \theta}
\widehat{\mathbf{K}}^{-1}\mathbf{y}
\;-\;
\mathrm{tr}\!\Big(\widehat{\mathbf{K}}^{-1}
\frac{\partial \widehat{\mathbf{K}}}{\partial \theta}\Big)
\right),
\qquad \theta\in\Theta.
\end{equation}

When computed exactly---for example, using a Cholesky factorization---both the NLML and its gradients require $\mathcal{O}(n_1^3)$ complexity. 
This cubic scaling quickly renders direct GP training impractical for large datasets. 
To mitigate this challenge, two main families of approximation strategies have been developed: 
(i) simplifying the GP model itself~\cite{pmlr-v5-titsias09a, hensman2013gaussianprocessesbigdata,wilson2015thoughtsmassivelyscalablegaussian, pmlr-v80-pleiss18a,covariance,ambikasaran2015fast,minden2017fast,geoga2020scalable},
and (ii) approximating the NLML objective~\eqref{eq:loss} and its gradients~\eqref{eq:loss_grad} through iterative methods \cite{Gardner_Pleiss_Weinberger_Bindel_Wilson_2018,pmlr-v162-wenger22a,wagner2025preconditionedadditivegaussianprocesses}. 
In the latter category, the term $\widehat{\mathbf{K}}^{-1}\mathbf{y}$ is typically estimated using Krylov subspace methods~\cite{Saad_2003}, 
while trace terms are approximated with stochastic estimators such as Hutchinson’s method~\cite{Hutchinson_1989, doi:10.1137/1.9781611976496.16} 
or stochastic Lanczos quadrature~\cite{Ubaru_Chen_Saad_2017}. 

In this paper, we focus on constructing stochastic Krylov estimators whose gradient estimates are unbiased (or bias-reduced) at a prescribed computational budget, rather than insisting on fully converged inner solves. Instead of computing~\eqref{eq:loss_grad} or related quantities exactly, we replace the costly matrix operations with low-variance stochastic estimators. This enables the use of a stochastic gradient-based outer loop while greatly reducing the computational expense of each iteration. Building on this property, we develop a class of \emph{Preconditioned Truncated Single-Sample (PTSS)} estimators, where truncation refers to a fixed number of iterations, for scalable estimation of linear system solutions and log-determinants across a wide range of stochastic optimization problems. By combining preconditioning with a stochastic truncated Lanczos approximation, PTSS produces low-variance estimators without incurring the expense of unnecessarily accurate solutions. Compared to deterministic truncation with the same expected computational cost, this randomized approach can significantly mitigate bias, while the estimator variance is controlled given a sufficiently accurate preconditioner.

{Throughout, we measure computational cost primarily in terms of matrix--vector products (MVPs) with the kernel matrix, since MVPs dominate wall-clock time in large-scale GP training.
Each Krylov iteration typically requires one MVP and one application of the preconditioner.
Thus a deterministic $m$-step truncated method costs $\approx m$ MVPs.
We use this MVP accounting to compare methods under matched computational budgets.}

The main contribution of this paper is a unified theoretical framework for TSS Krylov estimators, accompanied by a rigorous analysis of their bias, variance, and concentration properties. We show that randomized truncation of Krylov iterations yields mean-equivalent estimates to deterministic truncation. Furthermore, we derive explicit variance bounds in terms of the matrix condition number, sampling distribution, and truncation window. Two concrete variants are introduced: TSS--Solve for linear system solutions and TSS--LogQF for log-determinant estimation. The analysis provides a clear spectral interpretation linking preconditioning to both variance reduction and numerical stability, and identifies a specific sampling distribution that minimizes the variance bound as a function of the condition number. Together, these results establish a principled foundation for variance-controlled stochastic estimation in large-scale optimization.

This manuscript is organized as follows.
Section~\ref{sec: related works} reviews relevant background on conjugate gradient methods, preconditioning strategies for stochastic trace estimation, and unbiased Krylov estimators.
Section~\ref{sec:tss} introduces the TSS estimators and establishes variance and concentration bounds.
Section~\ref{subsec:stable-precond-lanczos} describes a robust and efficient implementation of TSS estimators using a unified preconditioned Lanczos framework.
Section~\ref{sec: numerical experiments} presents numerical results on both synthetic problems and real-world applications, demonstrating the effectiveness of the proposed estimators.
Finally, Section~\ref{sec: conclusion} concludes the paper with a summary of findings and future directions.

\section{Background}\label{sec: related works}

In this section, we review the application of block conjugate gradient (CG) methods in stochastic trace and derivative estimation, and then introduce the corresponding unbiased estimators.

\subsection{Block CG for multiple right-hand sides}

We first review block CG methods in the context of three fundamental computational tasks:  
(i) solving sequences of linear systems with a common symmetric positive definite (SPD) matrix \( \mathbf{A}\) and multiple (often random) right-hand sides,  
(ii) using the stochastic Lanczos algorithm to approximate the log-determinant of \( \mathbf{A}\), and  
(iii) approximating derivatives of inverse-quadratic and log-determinant terms when the matrix depends on parameters, \( \mathbf{A}=\mathbf{A}(\theta)\).  
These operations underlie objective and gradient evaluations in many large-scale stochastic optimization problems, including those in \eqref{eq:loss}--\eqref{eq:loss_grad}.

More specifically, let \(\mathbf{A}\in\mathbb{R}^{n\times n}\) be SPD and \(\mathbf{y}\in\mathbb{R}^n\) a vector. 
The quantities of interest can be written as
\begin{align}
    \mathcal{Q}(\mathbf{y}) &:= \mathbf{y}^\top \mathbf{A}^{-1}\mathbf{y}, \label{eq:quad_form}\\
    d\mathcal{Q}(\theta) &:= \mathbf{y}^\top \mathbf{A}^{-1}\tfrac{\partial \mathbf{A}}{\partial \theta}\mathbf{A}^{-1}\mathbf{y}, \label{eq:quad_form_grad}\\
    \mathcal{T}(\mathbf{A}) &:= \log|\mathbf{A}| = \mathrm{tr}(\log \mathbf{A}), \label{eq:logdet}\\
    d\mathcal{T}(\theta) &:= \mathrm{tr}\!\left(\mathbf{A}^{-1}\tfrac{\partial \mathbf{A}}{\partial \theta}\right). \label{eq:logdet_grad}
\end{align}
Evaluating or approximating these quantities efficiently and reliably is the focus of the block CG methods discussed below.

To approximate $\mathcal{Q}(\mathbf{y})$, we apply $m$ iterations of CG with initial guess $\mathbf{x}_{0}=0$ to the system
\begin{equation}\label{eq:cg_Ax}
    \mathbf{A}\mathbf{x} = \mathbf{y},
\end{equation}
yielding an approximate solution $\mathbf{x}_{m}$. 
Then
\begin{equation}\label{eq:quad_approx}
    \mathcal{Q}(\mathbf{y})\approx\widehat{\mathcal{Q}}(\mathbf{y}) = \mathbf{y}^\top \mathbf{x}_{m}, 
    \quad 
    d\mathcal{Q}(\theta)\approx \widehat{d\mathcal{Q}}(\theta) = (\mathbf{x}_{m})^\top \frac{\partial \mathbf{A}}{\partial \theta}\,\mathbf{x}_{m}.
\end{equation}

For the log-determinant, we employ stochastic Lanczos quadrature (SLQ) \cite{Ubaru_Chen_Saad_2017}. 
Sampling $k_z$ test vectors $\mathbf{z}^i\sim\mathcal{N}(0,I)$, we solve
\begin{equation}\label{eq:cg_Au}
    \mathbf{A}\mathbf{u}^i = \mathbf{z}^i, 
    \quad i=1,\dots,k_z,
\end{equation}
using block CG. 
Assume all linear systems in \eqref{eq:cg_Au} are solved using the same number of CG iterations. Then each run generates both approximate solutions $\mathbf{u}^i_{m}$ and a tridiagonal matrix $\mathbf{T}_{m}^{(\mathbf{z}^i)}\in\mathbb{R}^{m\times m}$ encoding spectral information (See Appendix~\ref{sec: cg eig est}). 
The log-determinant is then approximated by
\begin{equation}\label{eq:logdet_approx}
    \widehat{\mathcal{T}}(\mathbf{A}) 
    = \frac{1}{k_z}\sum_{i=1}^{k_z}\|\mathbf{z}^i\|^2 \,\mathbf{e}_1^\top \log(\mathbf{T}_{m}^{(\mathbf{z}^i)})\,\mathbf{e}_1,
\end{equation}
where $\mathbf{e}_1$ denotes the first column of the identity matrix whose size is the same as the tridiagonal matrix $\mathbf{T}_{m}^{(\mathbf{z}^i)}$.

Trace derivatives such as $d\mathcal{T}(\theta)$ can be estimated with Hutchinson’s method \cite{Hutchinson_1989}:
\begin{equation}\label{eq:trace_approx}
    \widehat{d\mathcal{T}}(\theta) 
    = \frac{1}{k_z}\sum_{i=1}^{k_z} (\mathbf{u}^i_{m})^\top \frac{\partial \mathbf{A}}{\partial \theta}\,\mathbf{z}^i,
\end{equation}
reusing the approximate solutions $\mathbf{u}^i_{m}$ from \eqref{eq:cg_Au}.

Since all solves in \eqref{eq:cg_Ax} and \eqref{eq:cg_Au} share the same coefficient matrix $\mathbf{A}$, they can be performed simultaneously using block CG. 
This allows us to compute $\mathbf{x}_{m}$ and $\{\mathbf{u}^i_{m}\}_{i=1}^{k_z}$ in parallel, leveraging block matrix-vector multiplications to amortize costs across multiple right-hand sides. 
A generic procedure is summarized in Algorithm~\ref{alg:BCG}.

\begin{algorithm}[htbp]
\caption{Block CG for estimating $\mathcal{Q}(\mathbf{y})$, $d\mathcal{Q}(\theta)$, $\mathcal{T}(\mathbf{A})$, and $d\mathcal{T}(\theta)$\label{alg:BCG}}
\begin{algorithmic}[1]
\STATE{\bf Input:} SPD matrix $\mathbf{A}$, vector $\mathbf{y}$, number of test vectors $k_z$, maximum iterations $m$.
\STATE{\bf Output:} Approximations of $\mathcal{Q}(\mathbf{y})$, $d\mathcal{Q}(\theta)$, $\mathcal{T}(\mathbf{A})$, and $d\mathcal{T}(\theta)$.
\STATE Sample $k_z$ random vectors $\mathbf{z}^i\sim\mathcal{N}(0,I)$.
\STATE Run $m$ steps of block CG on $\mathbf{A}[\mathbf{x},\mathbf{u}^1,\dots,\mathbf{u}^{k_z}] = [\mathbf{y},\mathbf{z}^1,\dots,\mathbf{z}^{k_z}]$.
\STATE Form approximations $\widehat{\mathcal{Q}}(\mathbf{y}), \widehat{d\mathcal{Q}}(\theta)$ using \eqref{eq:quad_approx}.
\STATE Form approximations $\widehat{\mathcal{T}}(\mathbf{A}), \widehat{d\mathcal{T}}(\theta)$ using \eqref{eq:logdet_approx} and \eqref{eq:trace_approx}.
\STATE Return all estimates.
\end{algorithmic}
\end{algorithm}

This block formulation underlies many popular implementations in existing software packages due to its theoretical appeal: all required estimates---such as inverse quadratic forms, log-determinants, and their derivatives---can be computed from matrix-vector products with \(\mathbf{A}\) and its derivatives \cite{Gardner_Pleiss_Weinberger_Bindel_Wilson_2018}. However, despite its efficiency and elegance in theory, the approach is often numerically unstable in practice due to the loss of orthogonality in the Krylov subspace basis, especially for ill-conditioned problems or long iteration runs.

\subsection{Preconditioning}
Preconditioning aims to improve the spectral properties of the coefficient matrix, thereby accelerating the convergence of Krylov subspace methods~\cite{Saad_2003}. In stochastic settings, it serves an analogous purpose by enhancing computational efficiency through variance reduction in estimators for traces and log-determinants, including $\widehat{\mathcal{T}}(\mathbf{A})$ and $d\widehat{\mathcal{T}}(\theta)$~\cite{pmlr-v162-wenger22a, Gardner_Pleiss_Weinberger_Bindel_Wilson_2018, Zhao_Xu_Huang_Chow_Xi_2024}.
Crucially, while eigenvalue clustering is often sufficient for accelerating linear solvers, effective variance reduction in our context imposes a stronger condition: $\mathbf{M}$ must spectrally approximate $\mathbf A$, i.e., $\mathbf M^{-1}\mathbf A\approx\mathbf I$.

We begin with the classical definition of preconditioning for solving linear systems. Consider a linear system $\mathbf{A}\mathbf{x}=\mathbf{y}$. 
Introducing a left preconditioner $\mathbf{M}$ yields the equivalent system
\begin{equation}\label{eq:pcg_Ax}
    \mathbf{M}^{-1}\mathbf{A}\mathbf{x} = \mathbf{M}^{-1}\mathbf{y},
\end{equation}
where $\mathbf{M}$ is chosen as an easily invertible approximation of $\mathbf{A}$.  
Although both systems share the same solution, 
the preconditioned matrix $\mathbf{M}^{-1}\mathbf{A}$ typically has a more favorable spectrum.  
The convergence of the preconditioned conjugate gradient (PCG) method satisfies
\[
    \|\mathbf{err}_{m}\|_{\mathbf{A}}
    \;\leq\; 2\|\mathbf{err}_{0}\|_{\mathbf{A}}
    \left(\frac{\sqrt{\kappa}-1}{\sqrt{\kappa}+1}\right)^{m},
\]
where $\kappa$ denotes the condition number of $\mathbf{M}^{-1}\mathbf{A}$ 
and $\mathbf{err}_{m}$ is the error after $m$ iterations.  
An effective preconditioner can therefore greatly reduce the computational cost of evaluating 
$\mathcal{Q}(\mathbf{y})$ and $d\mathcal{Q}(\theta)$.

Preconditioning further benefits stochastic approximations of log-determinants and trace derivatives.  
For example,
\begin{equation}\label{eq:precond_logdet}
    \log|\mathbf{A}|
    = \log|\mathbf{M}| + \log|\mathbf{M}^{-1/2}\mathbf{A}\mathbf{M}^{-1/2}|,
\end{equation}
so that stochastic Lanczos quadrature estimators can be applied to
$\mathbf{M}^{-1/2}\mathbf{A}\mathbf{M}^{-1/2}$, yielding reduced variance.

Similarly, for the trace derivative,
\begin{equation}\label{eq:precond_trace}
    \mathrm{tr}\!\left(\mathbf{A}^{-1}\frac{\partial \mathbf{A}}{\partial \theta}\right)
    = \mathrm{tr}\!\left(\mathbf{M}^{-1}\frac{\partial \mathbf{M}}{\partial \theta}\right)
    + \mathrm{tr}\!\left(\mathbf{A}^{-1}\frac{\partial \mathbf{A}}{\partial \theta}
      - \mathbf{M}^{-1}\frac{\partial \mathbf{M}}{\partial \theta}\right),
\end{equation}
where the first term can often be computed exactly at low cost, 
and the second term typically exhibits smaller variance 
due to the effect of preconditioning.

\subsection{Unbiased Krylov methods}
A well-known limitation of truncated Krylov subspace methods (e.g., CG or Lanczos stopped after $m$ iterations) is that they yield \emph{biased} approximations of scalar quantities of interest. Potapczynski et al.~\cite{pmlr-v139-potapczynski21a} demonstrated that truncated CG systematically underestimates inverse quadratic forms and overestimates log-determinants.
In practice, these biases can be especially problematic when Krylov approximations are embedded in stochastic optimization, since persistent bias in gradient or objective estimates violates the unbiasedness assumptions underlying many convergence guarantees~\cite{bach2024learning}. These challenges have led to the development and analysis of unbiased estimators based on Krylov subspace methods~\cite{pmlr-v139-potapczynski21a}.

The starting point for constructing unbiased estimators is to express a target quantity $\Phi$ as a finite sum:
\begin{equation}
    \Phi = \sum_{i=1}^n \Delta_i.
    \label{eq:Phi summation}
\end{equation}
For instance, when solving a linear system $\mathbf{A}\mathbf{x}=\mathbf{y}$ with CG, let $\mathbf{x}_i$ denote the $i$-th iterate (with $\mathbf{x}_0=0$). Then
\begin{equation*}
    \mathbf{x} = \mathbf{x}_n = \sum_{i=1}^n \underbrace{(\mathbf{x}_i - \mathbf{x}_{i-1})}_{\Delta_i}.
\end{equation*}
A natural but biased estimator is obtained by truncating the sum at $m$ when $m<n$:
\begin{equation}
    \widetilde{\Phi} = \sum_{i=1}^{m} \Delta_i = \mathbf{x}_{m}.
    \label{eq:Phi summation biased}
\end{equation}
Since $\widetilde{\Phi}\neq \Phi$ in general, such truncations are biased in the sense of~\cite{pmlr-v139-potapczynski21a}.  

To remove this bias, the deterministic cutoff $m$ is replaced by a random truncation index $\cdq \sim p_{\cdq}$ supported on $\{1,2,\ldots,n\}$. 
Here, $p_{\cdq}$ denotes the probability mass function; examples include geometric distributions where $\mathbb{P}(\cdq = j)\propto e^{-0.5j}$ or $\mathbb{P}(\cdq = j)\propto 2^{-j}$.
Two widely used constructions are the \emph{single-sample (SS)} estimator~\cite{10.1214/15-STS523} and the \emph{Russian roulette (RR)} estimator~\cite{Kahn_1955}:  
\begin{align}
    [\Phi]_{ss}\big|_{\cdq=j} &= \frac{\Delta_j}{\mathbb{P}(\cdq=j)}, 
    \label{eq:ss estimator}\\
    [\Phi]_{rr}\big|_{\cdq=j} &= \sum_{i=1}^{j}\frac{\Delta_i}{\mathbb{P}(\cdq\geq i)}.
    \label{eq:rr estimator}
\end{align}
Both satisfy $\mathbb{E}([\Phi]_{ss}) = \mathbb{E}([\Phi]_{rr}) = \Phi$. The SS estimator is simple and inexpensive to compute, while the RR estimator achieves lower variance at the cost of additional computation.

We now describe how to construct unbiased Krylov estimators for the four quantities in \eqref{eq:quad_form}--\eqref{eq:logdet_grad}.

To estimate the inverse quadratic form $\mathcal{Q}(\mathbf{y}) = \mathbf{y}^\top \mathbf{A}^{-1} \mathbf{y}$, 
we solve $\mathbf{A} \mathbf{x} = \mathbf{y}$ using CG and apply 
SS or RR estimators to the CG iterates.
Let $\mathbf{x}_0 = 0$ and $\mathbf{x}_i$ denote the $i$-th iterate of CG. Given a random truncation index $j \sim p_{\cdq}$, define:
\[
[\mathbf{x}]_{\mathrm{ss}} \big|_{\cdq=j} = \frac{\mathbf{x}_j - \mathbf{x}_{j-1}}{\mathbb{P}(\cdq = j)},
\qquad
[\mathbf{x}]_{\mathrm{rr}} \big|_{\cdq=j} = \sum_{i=1}^j \frac{\mathbf{x}_i - \mathbf{x}_{i-1}}{\mathbb{P}(\cdq \ge i)}.
\]
Then the unbiased inverse quadratic form estimators are:
\[
\widetilde{\mathcal{Q}}_{\mathrm{ss}}(\mathbf{y}) = \mathbf{y}^\top [\mathbf{x}]_{\mathrm{ss}},
\quad
\widetilde{\mathcal{Q}}_{\mathrm{rr}}(\mathbf{y}) = \mathbf{y}^\top [\mathbf{x}]_{\mathrm{rr}},
\quad
\mathbb{E}\, \widetilde{\mathcal{Q}}_{\mathrm{ss}/\mathrm{rr}}(\mathbf{y}) = \mathcal{Q}(\mathbf{y}).
\]

For the gradient $d\mathcal{Q}(\theta) = \mathbf{y}^\top \mathbf{A}^{-1} \tfrac{\partial \mathbf{A}}{\partial\theta} \mathbf{A}^{-1} \mathbf{y}$, we use the estimator
\[
\widetilde{d\mathcal{Q}}_{\mathrm{ss}/\mathrm{rr}}(\theta)= [\mathbf{x}^{\top}\tfrac{\partial \mathbf{A}}{\partial\theta}\mathbf{x}]_{\mathrm{ss}/\mathrm{rr}},
\quad
\mathbb{E}\, \widetilde{d\mathcal{Q}}_{\mathrm{ss}/\mathrm{rr}}(\theta) = d\mathcal{Q}(\theta).
\]

To estimate $d\mathcal{T}(\theta) = \mathrm{tr}\!\left(\mathbf{A}^{-1} \tfrac{\partial \mathbf{A}}{\partial\theta} \right)$,
we use Hutchinson’s identity:
\[
d\mathcal{T}(\theta) = \mathbb{E}_{\mathbf{z}} \left[ \mathbf{z}^\top \mathbf{A}^{-1} \tfrac{\partial \mathbf{A}}{\partial\theta} \mathbf{z} \right].
\]
Let $\{\mathbf{z}^i\}_{i=1}^k$ be $k$ standard Gaussian probes.
For each $\mathbf{z}^i$, construct an unbiased estimator $[\mathbf{u}^i]$ for $\mathbf{A}^{-1} \mathbf{z}^i$ as above.
Then the gradient estimators are given by:
\[
\widetilde{d\mathcal{T}}_{\mathrm{ss}/\mathrm{rr}}(\theta)
= \frac{1}{k} \sum_{i=1}^k [\mathbf{u}^i]_{\mathrm{ss}/\mathrm{rr}}^\top \tfrac{\partial \mathbf{A}}{\partial\theta} \, \mathbf{z}^i,
\qquad
\mathbb{E}\, \widetilde{d\mathcal{T}}_{\mathrm{ss}/\mathrm{rr}}(\theta) = d\mathcal{T}(\theta).
\]

To estimate the log-determinant $\mathcal{T}(\mathbf{A}) = \mathrm{tr}(\log \mathbf{A})$, one approach is to employ stochastic Lanczos quadrature implemented via the block CG framework. 
For each probe vector $\mathbf{z} \sim \mathcal{N}(0, I)$,  $j$ steps of CG are performed to solve $\mathbf{A} \mathbf{u} = \mathbf{z}$,  where the truncation index $j \sim p_{\cdq}$ is drawn from a prescribed distribution. 
The CG recurrence scalars $\alpha,\beta$ collected during these steps are then used to reconstruct the tridiagonal matrix $\Tjz{j}$ that would have been generated by $j$ steps of the Lanczos algorithm initialized with $\mathbf{q}_1 = \mathbf{z} / \|\mathbf{z}\|$ in exact arithmetic. See Appendix \ref{sec: cg eig est} for more details of this relation.
Let:
\[
s_j(\mathbf{z}) = \mathbf{e}_1^\top \log\big( \mathbf{T}^{(\mathbf{z})}_j \big) \mathbf{e}_1,
\qquad
\Delta_j(\mathbf{z}) = s_j(\mathbf{z}) - s_{j-1}(\mathbf{z}),\qquad s_0(\mathbf{z})=0,
\]
where the length of the standard basis vector $\mathbf{e}_1=[1,0,\dots,0]^T$ is equal to $j$ in $s_j$. Here we use the notation $\mathbf{e}_1$ (independent of $j$) simply for notational convenience.
Applying the SS or RR estimator based on the scaled sequence $\{ ||\mathbf{z}||^2\Delta_j(\mathbf{z}) \}$ produces unbiased estimates of $\mathbf{z}^\top \log(\mathbf{A}) \mathbf{z}$.
Averaging over $k$ probes gives the estimator:
\[
\widetilde{\mathcal{T}}_{\mathrm{ss}/\mathrm{rr}}(\mathbf{A})
= \frac{1}{k} \sum_{i=1}^k \left[ (\mathbf{z}^i)^\top \log(\mathbf{A}) \, \mathbf{z}^i \right]_{\mathrm{ss}/\mathrm{rr}},
\qquad
\mathbb{E}\, \widetilde{\mathcal{T}}_{\mathrm{ss}/\mathrm{rr}}(\mathbf{A}) = \mathcal{T}(\mathbf{A}).
\]

In practice, SS and RR estimators involve distinct tradeoffs:
\begin{itemize}[leftmargin=0.3in]
    \item \textbf{RR estimators} achieve lower variance by aggregating contributions from all iterates up to a random truncation index. However, this aggregation incurs significantly higher computational cost and memory usage, especially when the sampled index is large.
    \item \textbf{SS estimators} are computationally efficient, requiring only a single increment. Yet this simplicity comes at the cost of higher estimator variance.
\end{itemize}

To balance these tradeoffs in large-scale applications, it is essential to understand and control the variance behavior of SS-type estimators.

\section{Truncated Single-Sample (TSS) Krylov estimators}
\label{sec:tss}
In this section, we propose a class of  \emph{truncated single-sample} (TSS) Krylov estimators for two building blocks used in \eqref{eq:quad_form}--\eqref{eq:logdet_grad}: (i) solving linear systems based on CG and (ii) estimating the inverse quadratic form of $\log(\mathbf{A})$. Different from the standard SS estimator in \eqref{eq:ss estimator} where the random truncation $Q$ can range from $1$ to $n$, the $Q$ in TSS is sampled from a potentially smaller range $[\imin,\imax]$ customized by the user, namely,
\[
  \cdq \sim p_{\cdq} \quad \text{supported on} \quad \{i_{\min}, i_{\min}\!+\!1, \ldots, i_{\max}\},
\]
with $1 \le i_{\min} < i_{\max} \le n$. The lower bound $i_{\min}$ helps mitigate the variance introduced by very few iterations, while the upper bound $i_{\max}$ balances computational cost, accuracy, and variance. The SS estimator in \eqref{eq:ss estimator} corresponds to the choice $(i_{\min}, i_{\max}) = (1,n)$. We next introduce the definition and general properties of the TSS estimator.

\noindent\textbf{Notation.} All vector norms $\|\cdot\|$ are Euclidean. For a vector-valued random variable $X$, the variance is denoted by
$\mathrm{Var}(X) := \mathbb{E}\,\left(\|X-\mathbb{E}X\|^2\right).$

With this notation in place, and given the summation representation
\begin{equation}
\label{eq:Phi-sum-again}
\Phi \;=\; \sum_{i=1}^{n} \Delta_i.
\end{equation}
We now define the \emph{truncated single-sample (TSS)} estimator:
\begin{equation}
\label{eq:TSS-estimator-again}
\widetilde{\Phi}_{\mathrm{tss}} \;=\; \frac{\Delta_{\cdq}}{\mathbb{P}(\cdq)} + \sum_{i=1}^{i_{\min}-1} \Delta_i\, 
\qquad \cdq \sim p_{\cdq}\ \text{on}\ \{i_{\min},\ldots,i_{\max}\}.
\end{equation}

Having introduced the TSS estimator, we now establish its basic properties. In particular, we show that its expectation coincides with the deterministic truncation at $i_{\max}$ and represent its variance in terms of $\Delta_i$. 

\begin{proposition}
\label{prop:TSSbasic}
Let $\cdphiss$ be the TSS estimator in \eqref{eq:TSS-estimator-again}. Then
\begin{equation}
\label{eq:TSS-expectation-again}
\mathbb{E}\big[\cdphiss\big] \;=\; \sum_{i=1}^{i_{\max}} \Delta_i\;\;\text{and}\;\;
\mathrm{Var}\!\left(\widetilde{\Phi}_{\mathrm{tss}}\right)= \left(\sum_{j=\imin}^{\imax} \frac{||\Delta_j||^2}{\mathbb{P}(\cdq=j)}\right) - ||\Delta_*||^2,
\end{equation}
where $\Delta_*:=\sum\limits_{j=\imin}^{\imax}\Delta_j.$
As a result, for $\Phi$ defined in \eqref{eq:Phi-sum-again}, $\cdphiss$ is unbiased if $\imax=n$.
\end{proposition}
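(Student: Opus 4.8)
The plan is to decompose $\cdphiss$ into its random and deterministic parts and handle the two identities in \eqref{eq:TSS-expectation-again} separately. Writing $\cdphiss = \Delta_{\cdq}/\mathbb{P}(\cdq) + C$ with the nonrandom shift $C := \sum_{i=1}^{\imin-1}\Delta_i$, the only stochastic contribution comes from the single importance-weighted increment $\Delta_{\cdq}/\mathbb{P}(\cdq)$, so both moments reduce to a sum over the support $\{\imin,\ldots,\imax\}$ of $\cdq$.

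For the expectation, I would apply linearity and evaluate $\eebb[\Delta_{\cdq}/\mathbb{P}(\cdq)]$ by summing over the support: since $\cdq=j$ occurs with probability $\mathbb{P}(\cdq=j)$, the weight $1/\mathbb{P}(\cdq=j)$ cancels this probability exactly, leaving $\sum_{j=\imin}^{\imax}\Delta_j$. Adding back the deterministic tail $C$ merges the two index ranges into $\sum_{i=1}^{\imax}\Delta_i$, which is the first claim. The unbiasedness assertion is then immediate: taking $\imax = n$ recovers $\sum_{i=1}^{n}\Delta_i = \Phi$ by \eqref{eq:Phi-sum-again}.

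For the variance, I would first note that adding the deterministic constant $C$ leaves the variance unchanged, so $\vvbb(\cdphiss) = \vvbb(\Delta_{\cdq}/\mathbb{P}(\cdq))$. I would then invoke the vector-valued identity $\vvbb(X) = \eebb\|X\|^2 - \|\eebb X\|^2$, obtained by expanding $\|X-\eebb X\|^2$ and using the definition of $\vvbb$ from the Notation paragraph. The second moment $\eebb\|\Delta_{\cdq}/\mathbb{P}(\cdq)\|^2$ collapses by the same cancellation, but now one factor of $1/\mathbb{P}(\cdq=j)$ survives, producing $\sum_{j=\imin}^{\imax}\|\Delta_j\|^2/\mathbb{P}(\cdq=j)$; the squared-mean term equals $\|\Delta_*\|^2$ by the expectation computation above, which completes the identity.

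There is no serious obstacle here: the argument is a direct moment computation. The only points requiring care are (i) recognizing that the deterministic shift $C$ drops out of the variance, and (ii) correctly tracking how many factors of $1/\mathbb{P}(\cdq=j)$ survive the cancellation in the first versus the second moment—none remain in $\eebb[\cdot]$, whereas exactly one remains in $\eebb\|\cdot\|^2$ because the weight is squared before taking the probability-weighted sum. Keeping this bookkeeping straight is what yields the clean closed form in \eqref{eq:TSS-expectation-again}.
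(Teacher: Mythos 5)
Your proposal is correct and follows essentially the same route as the paper: a direct moment computation over the support of $\cdq$, with the probability weights cancelling once in the mean and surviving once in the second moment. The only cosmetic difference is that you invoke shift invariance together with the identity $\vvbb(X)=\eebb\|X\|^2-\|\eebb X\|^2$, whereas the paper expands $\big\|\Delta_j/\mathbb{P}(\cdq=j)-\Delta_*\big\|^2$ directly inside the weighted sum—the two expansions are term-for-term identical.
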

\begin{proof}
    Computing from the definition gives
    \[
    \mathbb{E}\big[\widetilde{\Phi}_{\mathrm{tss}}\big] = \sum_{j=\imin}^{\imax} \mathbb{P}(Q=j) \cdphiss|_{Q=j}
    = \sum_{j=\imin}^{\imax} \Delta_j + \sum_{i=1}^{i_{\min}-1} \Delta_i = \sum_{i=1}^{\imax} \Delta_i,
    \]
    where we have used the fact that $\sum\limits_{j=\imin}^{\imax} \mathbb{P}(Q=j) =1.$
    For $\Phi$ in \eqref{eq:Phi-sum-again} and $\imax=n$, it follows immediately that $\cdphiss$ is unbiased, i.e.
    $\mathbb{E}\big[\cdphiss\big]=\sum_{i=1}^{n} \Delta_i=\Phi.$
    This proves the claims about $\mathbb{E}\big[\cdphiss\big]$.
    
    Next we derive the variance estimate.
    For $\cdq=j$, we have
\begin{equation*}
    \cdphiss|_{\cdq=j}-\eebb\big[\cdphiss\big]= \frac{\Delta_j}{\mathbb{P}(\cdq=j)}-\sum_{i=\imin}^{\imax} \Delta_i = \frac{\Delta_j}{\mathbb{P}(\cdq=j)}-\Delta_*.
\end{equation*}
    Direct computation yields
\begin{equation*}
\begin{aligned}
\mathrm{Var}\!\left(\cdphiss\right)
&=\sum_{j=i_{\min}}^{i_{\max}}\mathbb{P}(\cdq=j)\left\|\frac{\Delta_j}{\mathbb{P}(\cdq=j)}-\Delta_*\right\|^2\\
    &= \sum_{j=\imin}^{\imax} \frac{||\Delta_j||^2}{\mathbb{P}(\cdq=j)} -\sum_{j=\imin}^{\imax} 2\Delta_j\cdot\Delta_* + \sum_{j=\imin}^{\imax} \mathbb{P}(\cdq=j)||\Delta_*||^2 \\
    &= \left( \sum_{j=\imin}^{\imax} \frac{||\Delta_j||^2}{\mathbb{P}(\cdq=j)}\right)  - 2||\Delta_*||^2+||\Delta_*||^2,
\end{aligned}
\end{equation*}
where we have used the definition of $\Delta_*$ and 
$\sum\limits_{j=\imin}^{\imax}\mathbb{P}(\cdq=j)=1.$
The proof is complete.
\end{proof}
Note that even though the estimator $\cdphiss$ is generally biased for the case $\imax<n$, the identity in \eqref{eq:TSS-expectation-again} shows that the expectation coincides with the $\imax$-th iterate (assuming the $0$-th iterate is defined as $0$).
Thus the estimator in expectation matches the deterministic truncation at $i_{\max}$, often at a much lower cost due to randomized early stopping.  The variance expression shows that the choice of sampling distribution $\mathbb{P}(\cdq=j)$ is also important.  

In the following  subsections, we examine two specific instances, TSS-Solve and TSS-LogQF. These two estimators illustrate the concrete choice of $\Delta_j$ in \eqref{eq:Phi-sum-again} for each problem and motivate suitable choices for the sampling distribution.

\subsection{TSS-Solve estimator}
\label{sub:TSS-CG}
We begin by applying the TSS estimator to the solution of a linear system. In this context, the terms $\Delta_j$ correspond to the differences between successive CG iterates. The following result establishes bounds on the expectation and variance of the resulting TSS-Solve estimator, with explicit dependence on the condition number of $\mathbf{A}$, the sampling range and the sampling distribution $p_{\cdq}$.
A more refined analysis of the variance bound and a particular choice $p_{\cdq}$ is presented in Section \ref{sub:discussion}.

\begin{theorem}[Mean and Variance of TSS-Solve]
\label{thm:Var-CG}
Let $\mathbf{A}\in\mathbb{R}^{n\times n}$ be SPD with condition number $\kappa=\kappa(\mathbf{A})$, and consider $\mathbf{A}\mathbf{x}=\mathbf{y}$ with CG iterates $\{\mathbf{x}_i\}_{i=0}^{n}$ initialized at $\mathbf{x}_0=0$.
Let $\cdq$ be a random variable supported on $\{i_{\min},\ldots,i_{\max}\}$ with mass function $p_{\cdq}$, and define the TSS-Solve estimator
\begin{equation}
\label{eq:cdphissCG}
\widetilde{\mathbf{x}}_{\mathrm{tss}} \;=\; \frac{\mathbf{x}_{\cdq}-\mathbf{x}_{\cdq-1}}{\mathbb{P}(\cdq)} \;+\;  \mathbf{x}_{\imin-1}.
\end{equation}
Then
\begin{equation}
\label{eq:phiCGVar}
\mathbb{E}\big[\widetilde{\mathbf{x}}_{\mathrm{tss}}\big]=\mathbf{x}_{i_{\max}}
\qquad\text{and}\qquad
\mathrm{Var}\!\left(\widetilde{\mathbf{x}}_{\mathrm{tss}}\right) \;\le\; 16\,\cdf{\kappa}\,\|\mathbf{x}\|^{2}\;\Gamma_{\textrm{Solve}},
\end{equation}
where
\begin{equation}
\label{eq:Gamma1}    
\Gamma_{\textrm{Solve}} \;:=\; \sum_{j=i_{\min}}^{i_{\max}} \frac{\varrho^{2(j-1)}_{\textrm{Solve}}}{\mathbb{P}(\cdq=j)},
\qquad
\varrho_{\textrm{Solve}} \;:=\; \frac{\sqrt{\kappa}-1}{\sqrt{\kappa}+1}\in[0,1).
\end{equation}
\end{theorem}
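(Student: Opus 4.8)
The plan is to specialize the general TSS identities of Proposition~\ref{prop:TSSbasic} to the CG setting by taking $\Delta_j = \mathbf{x}_j - \mathbf{x}_{j-1}$. Since $\mathbf{x}_0 = 0$, the partial sums telescope, $\sum_{i=1}^{j}\Delta_i = \mathbf{x}_j$, so the estimator in \eqref{eq:cdphissCG} is exactly the TSS estimator of \eqref{eq:TSS-estimator-again} for this choice of increments. The mean identity $\mathbb{E}[\widetilde{\mathbf{x}}_{\mathrm{tss}}] = \mathbf{x}_{i_{\max}}$ is then immediate: Proposition~\ref{prop:TSSbasic} gives $\mathbb{E}[\widetilde{\mathbf{x}}_{\mathrm{tss}}] = \sum_{i=1}^{i_{\max}}\Delta_i = \mathbf{x}_{i_{\max}}$, with no further work required.

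For the variance, I would begin from the exact expression in Proposition~\ref{prop:TSSbasic} and simply discard the nonnegative subtracted term $\|\Delta_*\|^2$, yielding the upper bound $\mathrm{Var}(\widetilde{\mathbf{x}}_{\mathrm{tss}}) \le \sum_{j=i_{\min}}^{i_{\max}} \|\mathbf{x}_j - \mathbf{x}_{j-1}\|^2 / \mathbb{P}(Q=j)$. The entire problem then reduces to bounding each increment $\|\Delta_j\|^2 = \|\mathbf{x}_j - \mathbf{x}_{j-1}\|^2$ in terms of $\kappa$, $\varrho$, and $\|\mathbf{x}\|$; once a bound of the form $\|\Delta_j\|^2 \le 16\kappa^2 \varrho^{2(j-1)}\|\mathbf{x}\|^2$ is available, factoring the common constant out of the sum leaves precisely $\Gamma$ from \eqref{eq:Gamma1}.

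The core step is to control $\|\mathbf{x}_j - \mathbf{x}_{j-1}\|$ through the error vectors $\mathbf{e}_j := \mathbf{x} - \mathbf{x}_j$. Writing $\mathbf{x}_j - \mathbf{x}_{j-1} = \mathbf{e}_{j-1} - \mathbf{e}_j$ and using the triangle inequality gives $\|\Delta_j\| \le \|\mathbf{e}_j\| + \|\mathbf{e}_{j-1}\|$. I would then invoke the standard CG convergence estimate $\|\mathbf{e}_j\|_{\mathbf{A}} \le 2\varrho^j\|\mathbf{e}_0\|_{\mathbf{A}}$ and, since $\mathbf{x}_0 = 0$ forces $\mathbf{e}_0 = \mathbf{x}$, convert to the Euclidean norm so that everything is expressed in $\|\mathbf{x}\|$. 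Combining the two consecutive error terms via $\varrho^j + \varrho^{j-1} \le 2\varrho^{j-1}$ (valid since $\varrho \in [0,1)$) produces $\|\Delta_j\| \le 4\kappa\,\varrho^{j-1}\|\mathbf{x}\|$, and squaring gives the target per-increment bound. Substituting and summing then completes the proof.

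The main obstacle, and the place where the powers of $\kappa$ must be tracked carefully, is the conversion between the $\mathbf{A}$-norm (in which CG contracts geometrically) and the Euclidean norm (in which the variance is measured). A naive two-sided spectral conversion, $\lambda_{\min}^{1/2}\|\cdot\| \le \|\cdot\|_{\mathbf{A}} \le \lambda_{\max}^{1/2}\|\cdot\|$, contributes only $\sqrt{\kappa}$ per error vector and would yield $\kappa$ rather than $\kappa^2$ in the squared bound. To reach the stated constant one routes through the residual, using $\|\mathbf{e}_j\| \le \|\mathbf{A}^{-1}\|\,\|\mathbf{r}_j\|$ together with $\|\mathbf{r}_j\| = \|\mathbf{A}\mathbf{e}_j\| \le \lambda_{\max}^{1/2}\|\mathbf{e}_j\|_{\mathbf{A}}$ and $\|\mathbf{e}_0\|_{\mathbf{A}} \le \lambda_{\max}^{1/2}\|\mathbf{x}\|$; this gives $\|\mathbf{e}_j\| \le 2\kappa\,\varrho^j\|\mathbf{x}\|$, which is the clean (if slightly loose) estimate matching the factor $16\kappa^2$ in the statement. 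Everything downstream is routine bookkeeping with the geometric factor $\varrho^{2(j-1)}$ and the identification of the sum with $\Gamma$.
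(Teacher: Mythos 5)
Your proposal is correct and follows essentially the same route as the paper's proof: both specialize Proposition~\ref{prop:TSSbasic} to the CG increments $\Delta_j=\mathbf{x}_j-\mathbf{x}_{j-1}$, drop the $-\|\Delta_*\|^2$ term, and bound each increment by $16\kappa^2\varrho^{2(j-1)}\|\mathbf{x}\|^2$ using the CG error estimate in the Euclidean norm with a $\kappa$ prefactor, which is exactly how the factor $16\kappa^2\|\mathbf{x}\|^2\Gamma$ arises in the paper. The only cosmetic difference is that the paper quotes the $\ell_2$-norm CG bound directly while you derive it from the $\mathbf{A}$-norm contraction via the residual; your side remark that the sharper $\sqrt{\kappa}$ norm conversion would actually improve the bound to $16\kappa\|\mathbf{x}\|^2\Gamma$ is also correct, and merely shows the stated constant is not tight.
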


\begin{proof}
Let $\Delta_j:=\mathbf{x}_j-\mathbf{x}_{j-1}$ and $\Delta_*:=\sum_{i=i_{\min}}^{i_{\max}}\Delta_i$.
From Proposition \ref{prop:TSSbasic}, we have
\[
\mathbb{E}\big[\widetilde{\mathbf{x}}_{\mathrm{tss}}\big]
=\sum_{i=1}^{i_{\max}}(\mathbf{x}_i-\mathbf{x}_{i-1})=\mathbf{x}_{i_{\max}}-\mathbf{x}_0=\mathbf{x}_{i_{\max}}.
\]
We next estimate the variance in \eqref{eq:phiCGVar}.  
Proposition \ref{prop:TSSbasic} shows that
\begin{equation}
\label{eq:vartssCG}
    \mathrm{Var}\!\left(\widetilde{\mathbf{x}}_{\mathrm{tss}}\right)\leq \sum_{j=\imin}^{\imax} \frac{||\Delta_j||^2}{\mathbb{P}(\cdq=j)}.
\end{equation}
We bound the term $||\Delta_j||^2=||\cdx_j-\cdx_{j-1}||^2$ using the condition number $\kappa$ as follows.
Recall the error estimate for CG in $l_2$ norm: 
\begin{equation*}
    ||\cdx_j-\cdx||\leq \cdf{\sqrt{\kappa}}\frac{2\varrho^j_{\textrm{Solve}}}{1+\varrho^{2j}_{\textrm{Solve}}} ||\cdx_0-\cdx||,\quad \text{with}\quad  
    \varrho_{\textrm{Solve}}=\frac{\sqrt{\kappa}-1}{\sqrt{\kappa}+1}<1.
\end{equation*}
We use this to deduce that 
\begin{equation*}
\begin{aligned}
    ||\cdx_j-\cdx_{j-1}||^2&\leq 2||\cdx_j-\cdx||^2+2||\cdx_{j-1}-\cdx||^2\\
    &\leq
    2\cdf{\kappa}\left(\frac{4\varrho^{2j}_{\textrm{Solve}}}{(1+\varrho^{2j}_{\textrm{Solve}})^2}+\frac{4\varrho^{2(j-1)}_{\textrm{Solve}}}{(1+\varrho^{2(j-1)}_{\textrm{Solve}})^2}\right) ||\cdx_0-\cdx||^2\\
    &\leq 2\cdf{\kappa}(4\varrho^{2(j-1)}_{\textrm{Solve}}+4\varrho^{2(j-1)}_{\textrm{Solve}})||\cdx_0-\cdx||^2 = 16\cdf{\kappa}\varrho^{2(j-1)}_{\textrm{Solve}}||\cdx_0-\cdx||^2.
\end{aligned}
\end{equation*}
Inserting this into \eqref{eq:vartssCG} yields 
\begin{equation*}
    \mathrm{Var}\!\left(\widetilde{\mathbf{x}}_{\mathrm{tss}}\right)\leq 16\cdf{\kappa}||\cdx_0-\cdx||^2\sum_{j=\imin}^{\imax} \frac{\varrho^{2(j-1)}_{\textrm{Solve}}}{\mathbb{P}(\cdq=j)},
\end{equation*}
which completes the proof under the assumption that $\cdx_0=0$.
\end{proof}

{We remark that the assumption $\mathbf{x}_0 = 0$ in Theorem~\ref{thm:Var-CG} is made without loss of generality; the result holds for arbitrary initial guesses. Indeed, if $\mathbf{x}_0 \neq 0$, one can define the TSS-Solve estimator as
\[
\widetilde{\mathbf{x}}_{\mathrm{tss}} = \frac{\Delta_Q}{\mathbb{P}(\cdq)} + \mathbf{x}_0 + \sum_{i=1}^{i_{\min}-1} \Delta_i,
\]
which is equivalent to the formulation in \eqref{eq:cdphissCG}, since both yield the same expression $\frac{\Delta_Q}{\mathbb{P}(\cdq)} + \mathbf{x}_{i_{\min}-1}$. The assumption $\mathbf{x}_0 = 0$ is adopted for notational convenience and simplifies the presentation of the estimator and its variance analysis.}

Theorem~\ref{thm:Var-CG} characterizes the mean and variance of the TSS--Solve estimator. First, the estimator is \emph{unbiased} relative to a fixed truncation level: its expectation matches the $i_{\max}$--step CG iterate, i.e., $\mathbb{E}[\widetilde{\mathbf{x}}_{\mathrm{tss}}] = \mathbf{x}_{i_{\max}}$. In the special case $i_{\max} = n$, the estimator becomes unbiased for the exact solution $\mathbf{x}$.
Second, the variance bound
\[
\mathrm{Var}(\widetilde{\mathbf{x}}_{\mathrm{tss}}) \;\le\; 16\,\cdf{\kappa}\,\|\mathbf{x}\|^2\,\Gamma_{\textrm{Solve}}
\]
demonstrates how both the conditioning of $\mathbf{A}$ and the sampling strategy influence the deviation of the estimator from the expectation. In particular, a smaller condition number $\kappa$ gives a smaller upper bound, suggesting that preconditioning is beneficial for variance reduction.

For fixed $\kappa$, the structure of the term
\[
\Gamma_{\textrm{Solve}} = \sum_{j=i_{\min}}^{i_{\max}} \frac{\varrho^{2(j-1)}_{\textrm{Solve}}}{\mathbb{P}(\cdq = j)}
\]
reveals the tradeoff between computational cost and statistical efficiency. Since \( \varrho^{2(j-1)}_{\textrm{Solve}} \) decays rapidly with \( j \), the later terms in the sum contribute less to the variance when they are weighted properly. A detailed discussion of the choice of the distribution $p_{\cdq}$ is presented in Section \ref{sub:discussion}.

The following corollary provides a concentration bound that quantifies the probability that the TSS-Solve estimator deviates from the $i_{\max}$-step CG iterate.

\begin{corollary}[Concentration of TSS-Solve]
\label{cor:phiCG}
Let $\widetilde{\mathbf{x}}_{\mathrm{tss}}$ denote the TSS-Solve estimator in \eqref{eq:cdphissCG}.
Under the assumptions of Theorem~\ref{thm:Var-CG}, for any $\delta>0$,
\[
\mathbb{P}\!\left(\,\big\|\widetilde{\mathbf{x}}_{\mathrm{tss}}-\mathbf{x}_{i_{\max}}\big\|\ge \delta\,\right)
\;\le\; 16\,\delta^{-2}\,\cdf{\kappa}\,\|\mathbf{x}\|^{2}\,\Gamma_{\textrm{Solve}},
\]
where $\Gamma_{\textrm{Solve}}$ is defined in \eqref{eq:Gamma1}.
\end{corollary}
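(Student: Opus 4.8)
The plan is to obtain the bound as an immediate consequence of Theorem~\ref{thm:Var-CG} via Chebyshev's inequality, since the corollary merely repackages the variance estimate as a tail bound. The two ingredients supplied by the theorem are the mean identity $\mathbb{E}[\widetilde{\mathbf{x}}_{\mathrm{tss}}] = \mathbf{x}_{i_{\max}}$ and the variance bound $\mathrm{Var}(\widetilde{\mathbf{x}}_{\mathrm{tss}}) \le 16\,\kappa^2\,\|\mathbf{x}\|^2\,\Gamma$, both of which I would invoke directly.

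First I would use the mean identity to recenter the event of interest. Because $\mathbf{x}_{i_{\max}} = \mathbb{E}[\widetilde{\mathbf{x}}_{\mathrm{tss}}]$, the deviation $\|\widetilde{\mathbf{x}}_{\mathrm{tss}} - \mathbf{x}_{i_{\max}}\|$ is literally the centered deviation $\|\widetilde{\mathbf{x}}_{\mathrm{tss}} - \mathbb{E}[\widetilde{\mathbf{x}}_{\mathrm{tss}}]\|$. This identification is what makes the vector-valued variance applicable to the stated probability.

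Next I would apply Markov's inequality to the nonnegative scalar random variable $\|\widetilde{\mathbf{x}}_{\mathrm{tss}} - \mathbb{E}[\widetilde{\mathbf{x}}_{\mathrm{tss}}]\|^2$. Rewriting the event $\{\|\cdot\|\ge\delta\}$ as $\{\|\cdot\|^2\ge\delta^2\}$ and invoking Markov gives
\[
\mathbb{P}\big(\|\widetilde{\mathbf{x}}_{\mathrm{tss}} - \mathbb{E}[\widetilde{\mathbf{x}}_{\mathrm{tss}}]\| \ge \delta\big)
\;\le\; \frac{\mathbb{E}\,\|\widetilde{\mathbf{x}}_{\mathrm{tss}} - \mathbb{E}[\widetilde{\mathbf{x}}_{\mathrm{tss}}]\|^2}{\delta^2}
\;=\; \frac{\mathrm{Var}(\widetilde{\mathbf{x}}_{\mathrm{tss}})}{\delta^2},
\]
where the final equality is exactly the definition $\mathrm{Var}(X) := \mathbb{E}(\|X-\mathbb{E}X\|^2)$ fixed in the Notation paragraph. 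Substituting the variance bound from Theorem~\ref{thm:Var-CG} then produces the claimed $16\,\delta^{-2}\,\kappa^2\,\|\mathbf{x}\|^2\,\Gamma$.

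I do not anticipate any genuine obstacle; the argument is a one-line consequence of the preceding theorem. The only point worth stating carefully is that the paper's vector-valued variance is defined precisely as the expected squared Euclidean norm of the centered estimator, so Markov's inequality applies directly to $\|\cdot\|^2$ without any intermediate coordinatewise reduction. Everything else is substitution.
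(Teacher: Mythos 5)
Your proposal is correct and matches the paper's proof: the paper likewise recenters using $\mathbb{E}[\widetilde{\mathbf{x}}_{\mathrm{tss}}]=\mathbf{x}_{i_{\max}}$, applies Chebyshev's inequality to get $\mathbb{P}\big(\|\widetilde{\mathbf{x}}_{\mathrm{tss}}-\mathbf{x}_{i_{\max}}\|\ge\delta\big)\le \delta^{-2}\mathrm{Var}(\widetilde{\mathbf{x}}_{\mathrm{tss}})$, and substitutes the variance bound from Theorem~\ref{thm:Var-CG}. Your explicit derivation of Chebyshev via Markov's inequality on $\|\cdot\|^2$ is just an unpacking of the same step, valid under the paper's vector-variance definition.
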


\begin{proof}
 Since $\mathbb{E}[\widetilde{\mathbf{x}}_{\mathrm{tss}}] = \mathbf{x}_{i_{\max}}$ in \eqref{eq:phiCGVar}, it follows from Chebyshev's inequality that
    $$P\left(\Vert\widetilde{\mathbf{x}}_{\mathrm{tss}} - \mathbf{x}_{i_{\max}}\Vert\geq \delta\right)\leq \delta^{-2}\mathrm{Var}(\widetilde{\mathbf{x}}_{\mathrm{tss}}).$$
    The desired inequality follows by using the estimate for $\mathrm{Var}(\widetilde{\mathbf{x}}_{\mathrm{tss}})$ in \eqref{eq:phiCGVar}.
\end{proof}

\subsection{TSS-LogQF estimator}
\label{sub:TSS-Lanczos}
Having established the mean and variance properties of the TSS-Solve estimator for solving linear systems, we now consider the trace estimation problem for matrix logarithms, where a typical quantity of interest is the quadratic form \(\mathbf{z}^\top \log(\mathbf{A})\,\mathbf{z}\) with a SPD matrix $\mathbf{A}$ ($\zbf$ is a random vector). A standard approach is to approximate this quantity using Lanczos quadrature, which constructs a sequence of increasingly accurate estimates from Krylov subspace projections. In analogy with the TSS-Solve method, we define a truncated single-sample (TSS) variant of Lanczos quadrature that randomizes over a window of iterations to approximate the quadratic form for the matrix logarithm. We thus term the estimator TSS-LogQF. The following theorem establishes that the TSS-LogQF estimator is unbiased with respect to the deterministic Lanczos output truncated at a prescribed Krylov subspace dimension \(i_{\max}\), and provides an upper bound on its variance involving the condition number of \(\mathbf{A}\), the sampling range and the sampling distribution $p_{\cdq}$.
A more refined analysis of the variance bound and the appropriate choice of $p_{\cdq}$ will be presented in Section \ref{sub:discussion}.

\begin{theorem}[Mean and Variance of TSS-LogQF]
\label{thm:Var-logdet}
Let $\mathbf{A}\in\mathbb{R}^{n\times n}$ be SPD with condition number $\kappa$, and let $\mathbf{z}\in\mathbb{R}^n\!\setminus\!\{0\}$.
Consider the quadratic form
\[
\Phi \;=\; \frac{\mathbf{z}^\top \log(\mathbf{A})\,\mathbf{z}}{\|\mathbf{z}\|^2}.
\]
For a Lanczos run started at $\mathbf{q}_1=\mathbf{z}/\|\mathbf{z}\|$, let $\Tjz{j}$ be the $j\times j$ tridiagonal matrix after $j$ steps, and define
\[
s_j(\mathbf{z}) \;:=\; \mathbf{e}_1^\top \log\!\big(\Tjz{j}\big)\,\mathbf{e}_1,
\qquad
\Delta_j \;:=\; s_j(\mathbf{z})-s_{j-1}(\mathbf{z}),\qquad s_0(\mathbf{z})\equiv 0.
\]
With $\cdq$ supported on $\{i_{\min},\ldots,i_{\max}\}$, the TSS-LogQF estimator
\begin{equation}
\label{eq:philogdet}
\widetilde{\Phi}_{\mathrm{tss}} \;=\; s_{\imin-1}(\mathbf{z}) \;+\; \frac{\Delta_{\cdq}}{\mathbb{P}(\cdq)}
\end{equation}
satisfies
\begin{equation}
\label{eq:philogdetVar}
\mathbb{E}\big[\widetilde{\Phi}_{\mathrm{tss}}\big] \;=\; s_{i_{\max}}(\mathbf{z})
\qquad\text{and}\qquad
\mathrm{Var}\!\left(\widetilde{\Phi}_{\mathrm{tss}}\right)
\;\le\; 16\,\big(\sqrt{\kappa+1}+1\big)^2 \log^2(2\kappa)\;\Gamma_{\textrm{LogQF}},
\end{equation}
where
\begin{equation}
\label{eq:Gamma2}
\Gamma_{\textrm{LogQF}} \;:=\; \sum_{j=i_{\min}}^{i_{\max}} \frac{\varrho^{4(j-1)}_{\textrm{LogQF}}}{\mathbb{P}(\cdq=j)},
\qquad
\varrho_{\textrm{LogQF}} \;:=\; \frac{\sqrt{\kappa+1}-1}{\sqrt{\kappa+1}+1}\in[0,1).
\end{equation}
\end{theorem}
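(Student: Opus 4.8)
The plan is to piggyback on Proposition~\ref{prop:TSSbasic} and then reduce the whole statement to a single geometric-decay estimate for the Lanczos quadrature increments. First I would observe that, because $s_0(\mathbf{z})\equiv 0$, the partial sums telescope: $\sum_{i=1}^{i_{\min}-1}\Delta_i = s_{i_{\min}-1}(\mathbf{z})$ and $\sum_{i=1}^{i_{\max}}\Delta_i = s_{i_{\max}}(\mathbf{z})$. Hence the estimator \eqref{eq:philogdet} is exactly the generic TSS form \eqref{eq:TSS-estimator-again} with $\Delta_j = s_j(\mathbf{z})-s_{j-1}(\mathbf{z})$, so Proposition~\ref{prop:TSSbasic} immediately yields $\mathbb{E}[\widetilde{\Phi}_{\mathrm{tss}}] = s_{i_{\max}}(\mathbf{z})$ and, after dropping the nonnegative term $\|\Delta_*\|^2$,
\[
\mathrm{Var}(\widetilde{\Phi}_{\mathrm{tss}}) \;\le\; \sum_{j=i_{\min}}^{i_{\max}} \frac{\Delta_j^2}{\mathbb{P}(\cdq=j)}.
\]
The remaining task is then purely analytic: establish the per-term bound $\Delta_j^2 \le 16(\sqrt{\kappa+1}+1)^2\log^2(2\kappa)\,\varrho^{4(j-1)}$, since summing this against $1/\mathbb{P}(\cdq=j)$ reproduces exactly $16(\sqrt{\kappa+1}+1)^2\log^2(2\kappa)\,\Gamma$.

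To control $\Delta_j$ I would route it through the exact value $\Phi$ by the triangle inequality,
\[
|\Delta_j| = |s_j(\mathbf{z})-s_{j-1}(\mathbf{z})| \le |s_j(\mathbf{z})-\Phi| + |s_{j-1}(\mathbf{z})-\Phi|,
\]
so the problem reduces to a geometric convergence estimate of the form $|s_j(\mathbf{z})-\Phi|\le C(\kappa)\,\varrho^{2j}$. Given such an estimate, $\varrho<1$ forces $|\Delta_j|\le C(\kappa)(\varrho^{2j}+\varrho^{2(j-1)})\le 2C(\kappa)\varrho^{2(j-1)}$, and with the choice $C(\kappa)=2(\sqrt{\kappa+1}+1)\log(2\kappa)$ squaring gives precisely the per-term bound needed above.

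The core of the argument is therefore the quadrature error estimate. I would first invoke the Lanczos--Gauss connection: the eigenvalues of $\Tjz{j}=\mathbf{Q}_j^\top\mathbf{A}\mathbf{Q}_j$ lie in $[\lambda_{\min},\lambda_{\max}]$ by the Rayleigh-quotient/interlacing property, and $s_j(\mathbf{z})=\mathbf{e}_1^\top\log(\Tjz{j})\mathbf{e}_1$ is exactly the $j$-node Gauss quadrature of $\Phi=\int\log\lambda\,d\mu(\lambda)$, where $\mu$ is the spectral measure of $\mathbf{A}$ relative to $\mathbf{z}/\|\mathbf{z}\|$ (a probability measure, with positive Gauss weights summing to one). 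Since this rule is exact on $\mathcal{P}_{2j-1}$, subtracting the best uniform approximant $p^\star\in\mathcal{P}_{2j-1}$ of $\log$ on $[\lambda_{\min},\lambda_{\max}]$ and using that both $\mu$ and the quadrature weights carry unit total mass yields $|s_j(\mathbf{z})-\Phi|\le 2\,E_{2j-1}(\log)$, where $E_m(\log)$ is the degree-$m$ best polynomial approximation error. Finally I would bound $E_m(\log)$ using the analyticity of $\log$ off the branch cut $(-\infty,0]$: mapping $[\lambda_{\min},\lambda_{\max}]$ affinely onto $[-1,1]$ sends the branch point $0$ to $t_0=-\tfrac{\kappa+1}{\kappa-1}<-1$, so $\log$ extends analytically inside the Bernstein ellipse determining the geometric factor. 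A standard Chebyshev/Bernstein estimate, together with a bound on $|\log|$ on that ellipse (which contributes the $\log(2\kappa)$ factor), produces $E_m(\log)\le (\sqrt{\kappa+1}-1)\log(2\kappa)\,\varrho^{m}$ with $\varrho=\tfrac{\sqrt{\kappa+1}-1}{\sqrt{\kappa+1}+1}$; evaluated at $m=2j-1$ this is exactly $C(\kappa)\varrho^{2j}/2$, closing the loop.

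Assembling the pieces gives $\Delta_j^2\le 16(\sqrt{\kappa+1}+1)^2\log^2(2\kappa)\,\varrho^{4(j-1)}$, and summation against $1/\mathbb{P}(\cdq=j)$ delivers the stated variance bound. I expect the last step to be the main obstacle: extracting the exact geometric rate $\varrho=\tfrac{\sqrt{\kappa+1}-1}{\sqrt{\kappa+1}+1}$ and the prefactor $(\sqrt{\kappa+1}+1)\log(2\kappa)$ from best-approximation theory for $\log$. Note this rate is slightly conservative relative to the sharp value $\tfrac{\sqrt{\kappa}-1}{\sqrt{\kappa}+1}$ associated with the exact singularity distance $t_0=-\tfrac{\kappa+1}{\kappa-1}$, so one must confirm that the particular ellipse used (enclosing the spectrum with the margin that yields the effective condition number $\kappa+1$) is the one producing the stated constants, and one must track how the logarithmic blow-up toward the branch point enters only as the benign $\log(2\kappa)$ prefactor rather than degrading the geometric decay. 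Controlling this singular behavior on the ellipse is the only genuinely delicate estimate; everything else is bookkeeping resting on Proposition~\ref{prop:TSSbasic} and the Gauss-quadrature exactness.
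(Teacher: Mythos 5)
Your proposal is correct, and its statistical skeleton is identical to the paper's: mean and the variance inequality come from Proposition~\ref{prop:TSSbasic}, the increment $\Delta_j$ is routed through the exact value $\Phi$ via $\Delta_{j,0}=s_j(\mathbf{z})-\Phi$, the geometric decay bound $|\Delta_{j,0}|\le 2(\sqrt{\kappa+1}+1)\log(2\kappa)\,\varrho^{2j}$ is invoked, and $\varrho^{2j}\le\varrho^{2(j-1)}$ plus squaring produces the factor $16$ (you do triangle-inequality-then-square, the paper does $|a-b|^2\le 2a^2+2b^2$; both yield the same constant). The genuine divergence is in how that quadrature error bound is obtained: the paper simply cites it as Corollary~4 of \cite{kressner2022trace}, whereas you re-derive it from scratch --- Gauss quadrature exactness of degree $2j-1$ together with unit total mass of the spectral measure and the Gauss weights gives $|s_j(\mathbf{z})-\Phi|\le 2E_{2j-1}(\log)$, and then a Bernstein-ellipse estimate for the best polynomial approximation of $\log$ supplies the rate $\varrho=\frac{\sqrt{\kappa+1}-1}{\sqrt{\kappa+1}+1}$ and the prefactor. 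Your arithmetic linking $E_{2j-1}(\log)\le(\sqrt{\kappa+1}-1)\log(2\kappa)\varrho^{2j-1}$ to the target constant $2(\sqrt{\kappa+1}+1)\log(2\kappa)\varrho^{2j}$ checks out, and your self-diagnosis is accurate: the only step you leave unproven --- choosing the ellipse with the margin that produces the effective condition number $\kappa+1$ and verifying $|\log|\le\log(2\kappa)$ on it --- is precisely the content of the cited corollary, so nothing in your argument is circular or wrong, merely re-proved in sketch form. What each route buys: the paper's citation keeps the proof short and delegates all constant-tracking to the literature; your route is self-contained and makes transparent where the $\log(2\kappa)$ amplification and the shift from $\kappa$ to $\kappa+1$ in the rate actually come from, at the cost of having to carry out the delicate ellipse estimate that the paper never needs to touch.
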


\begin{proof}
For $\mathbb{E}\big[\widetilde{\Phi}_{\mathrm{tss}}\big]$, it follows directly from \eqref{eq:TSS-expectation-again} and $s_0(\mathbf{z})=0$ that
$$\mathbb{E}\big[\widetilde{\Phi}_{\mathrm{tss}}\big]=\sum_{i=1}^{i_{\max}}\Delta_i=s_{i_{\max}}(\mathbf{z})-s_0(\zbf)=s_{i_{\max}}(\mathbf{z}).$$

    Next we estimate $\mathrm{Var}\!\left(\widetilde{\Phi}_{\mathrm{tss}}\right)
\;$. 
    Proposition \ref{prop:TSSbasic} shows that
    \begin{equation}
    \label{eq:varPhiDelta}
        \vvbb\!\left(\cdphiss\right)\leq \sum_{j=\imin}^{\imax} \frac{|\Delta_j|^2}{\mathbb{P}(\cdq=j)},
    \end{equation}
    where $\Delta_j$ is defined in \eqref{eq:philogdet} and is a scalar.
    To estimate $|\Delta_j|$, we first define $\Delta_{j,0} := s_j(\mathbf{z}) - \Phi$ as the (scaled) $j$-step Lanczos quadrature error.
    Since $\Delta_j=\Delta_{j,0}-\Delta_{j-1,0}$, we have
    \begin{equation}
    \label{eq:Deltaj0}
        |\Delta_j|^2\leq 2|\Delta_{j,0}|^2+2|\Delta_{j-1,0}|^2.
    \end{equation}
    According to \cite[Corollary 4]{kressner2022trace}, the Lanczos error $|\Delta_{j,0}|$ satisfies 
    $$|\Delta_{j,0}|\leq 2(\sqrt{\kappa+1}+1)\log(2\kappa)\varrho^{2j}_{\textrm{LogQF}}\quad\text{with}\quad\varrho_{\textrm{LogQF}}=\frac{\sqrt{\kappa+1}-1}{\sqrt{\kappa+1}+1}.$$
    Since $\varrho^j_{\textrm{LogQF}}\leq \varrho^{j-1}_{\textrm{LogQF}}$, we then deduce from \eqref{eq:Deltaj0} and the estimate above that 
    $$|\Delta_j|^2\leq 16(\sqrt{\kappa+1}+1)^2\log^2(2\kappa)\varrho^{4(j-1)}_{\textrm{LogQF}}.$$
Inserting this estimate into \eqref{eq:varPhiDelta} gives \eqref{eq:philogdetVar}.
\end{proof}

Theorem~\ref{thm:Var-logdet} describes the statistical properties of the TSS-LogQF estimator for log-determinant-type quantities. The estimator is unbiased with respect to the truncated quadrature: the expected value recovers the $i_{\max}$-step Lanczos approximation $s_{i_{\max}}(\mathbf{z})$ to the target quantity $\Phi$ in Theorem~\ref{thm:Var-logdet}. The variance bound reveals the influence of both the condition number $\kappa$ and the sampling distribution. The dependence on $\kappa$ enters through the logarithmic amplification factor $\log^2(2\kappa)$, the factor $(\sqrt{\kappa+1}+1)^2$ and the decay rate 
\[
\varrho_{\textrm{LogQF}} = \frac{\sqrt{\kappa+1}-1}{\sqrt{\kappa+1}+1},
\]
which governs the size of the increment terms $\Delta_j$. The quantity
\[
\Gamma_{\textrm{LogQF}} = \sum_{j=i_{\min}}^{i_{\max}} \frac{\varrho^{4(j-1)}_{\textrm{LogQF}}}{\mathbb{P}(\cdq = j)}
\]
quantifies how the sampling strategy interacts with this decay.

Theorem~\ref{thm:Var-logdet} also leads to a concentration bound for the TSS--LogQF estimator by applying Chebyshev’s inequality. 

\begin{corollary}[Concentration of TSS-LogQF]
Let $\widetilde{\Phi}_{\mathrm{tss}}$ denote the TSS Lanczos estimator in \eqref{eq:philogdet}.
Under the assumptions of Theorem \ref{thm:Var-logdet}, for any $\delta>0$,
\[
\mathbb{P}\!\left(\,\big|\widetilde{\Phi}_{\mathrm{tss}}-s_{i_{\max}}(\mathbf{z})\big|\ge \delta\,\right)
\;\le\; 16\,\delta^{-2}\,\big(\sqrt{\kappa+1}+1\big)^2 \log^2(2\kappa)\;\Gamma,
\]
where $\Gamma$ is defined in \eqref{eq:Gamma2}.
\end{corollary}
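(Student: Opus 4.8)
The plan is to mirror exactly the proof of the preceding Corollary (Concentration of TSS--Solve): this is a one-sided tail bound that follows immediately from Chebyshev's inequality once the mean and variance from Theorem~\ref{thm:Var-logdet} are in hand. The one feature worth noting at the outset is that here $\widetilde{\Phi}_{\mathrm{tss}}$ is \emph{scalar}-valued (each $\Delta_j = s_j(\mathbf{z}) - s_{j-1}(\mathbf{z})$ is a real number), so the general variance notation $\mathrm{Var}(X) = \mathbb{E}(\|X - \mathbb{E}X\|^2)$ reduces to the ordinary scalar variance $\mathbb{E}\big[(\widetilde{\Phi}_{\mathrm{tss}} - \mathbb{E}\widetilde{\Phi}_{\mathrm{tss}})^2\big]$, which is precisely the quantity to which the classical scalar Chebyshev inequality applies.

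First I would invoke Theorem~\ref{thm:Var-logdet}, which gives the centering identity $\mathbb{E}[\widetilde{\Phi}_{\mathrm{tss}}] = s_{i_{\max}}(\mathbf{z})$. This identifies $s_{i_{\max}}(\mathbf{z})$ as the mean, so the event $\big|\widetilde{\Phi}_{\mathrm{tss}} - s_{i_{\max}}(\mathbf{z})\big| \ge \delta$ is exactly a deviation-from-the-mean event. Applying Chebyshev's inequality then yields
\[
\mathbb{P}\!\left(\,\big|\widetilde{\Phi}_{\mathrm{tss}} - s_{i_{\max}}(\mathbf{z})\big| \ge \delta \,\right)
\;\le\; \delta^{-2}\,\mathrm{Var}\!\left(\widetilde{\Phi}_{\mathrm{tss}}\right).
\]
Finally I would substitute the variance bound $\mathrm{Var}(\widetilde{\Phi}_{\mathrm{tss}}) \le 16\,(\sqrt{\kappa+1}+1)^2\log^2(2\kappa)\,\Gamma$ from \eqref{eq:philogdetVar}, with $\Gamma$ as in \eqref{eq:Gamma2}, which produces the claimed inequality.

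There is no genuine obstacle here: the substantive work---bounding the scalar increments $|\Delta_j|$ via the Lanczos quadrature error estimate of \cite[Corollary 4]{kressner2022trace} and summing against the sampling distribution---was already carried out in Theorem~\ref{thm:Var-logdet}. The only point requiring a line of justification is the passage from the vector-valued $\mathrm{Var}$ convention to scalar Chebyshev, which is immediate since the two coincide in the scalar case. Thus the corollary is a direct corollary in the strict sense, and the proof amounts to citing the mean/variance result and quoting Chebyshev's inequality.
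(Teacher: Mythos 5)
Your proposal is correct and follows exactly the same route as the paper: the paper's proof likewise invokes Chebyshev's inequality together with the mean identity $\mathbb{E}[\widetilde{\Phi}_{\mathrm{tss}}] = s_{i_{\max}}(\mathbf{z})$ and the variance bound in \eqref{eq:philogdetVar}, mirroring the proof of Corollary~\ref{cor:phiCG}. Your additional remark that the vector-variance convention reduces to the ordinary scalar variance here is a sound (if optional) clarification and does not change the argument.
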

\begin{proof}
    Similar to the proof of Corollary \ref{cor:phiCG}, the inequality follows directly from Chebyshev's inequality and \eqref{eq:philogdetVar}.
\end{proof}

\subsection{Discussion on sampling distribution and condition number}
\label{sub:discussion}
In this section, we provide a detailed discussion of the key factors that govern the variance of the proposed TSS estimators. In particular, the variance bounds established in Theorem~\ref{thm:Var-CG} and Theorem~\ref{thm:Var-logdet} depend critically on the choice of the sampling distribution $p_{\cdq}$ for the random iteration count \( Q \) and the condition number \( \kappa \). 

Our analysis focuses on the structure and optimization of the \(\Gamma\) factor that appears in the variance upper bound. We reveal a delicate connection between the computational cost of the estimator and the variance through the lens of matrix conditioning. Since the TSS-Solve and TSS-LogQF estimators possess $\Gamma$ factors with distinct progression patterns, we denote their associated \(\Gamma\) factors as follows:
\begin{align}
    \label{eq:GammaSolve}
    \Gamma_{\textrm{Solve}} &= \sum_{j=i_{\min}}^{i_{\max}} \frac{\varrho^{2(j-1)}_{\textrm{Solve}}}{\mathbb{P}(\cdq = j)},
    \quad \text{with} \quad
    \varrho_{\textrm{Solve}} = \frac{\sqrt{\kappa} - 1}{\sqrt{\kappa} + 1}, \\
    \label{eq:GammaLogQF}
    \Gamma_{\textrm{LogQF}} &= \sum_{j=i_{\min}}^{i_{\max}} \frac{\varrho_{\textrm{LogQF}}^{4(j-1)}}{\mathbb{P}(\cdq = j)},
    \quad \text{with} \quad
    \varrho_{\textrm{LogQF}} = \frac{\sqrt{\kappa + 1} - 1}{\sqrt{\kappa + 1} + 1}.
\end{align}
Both expressions are weighted geometric sums with weights given by inverse sampling probabilities \( \mathbb{P}(\cdq = j) \), underlining the importance of the sampling distribution $p_{\cdq}$ in controlling variance and computational cost.

Since the sampling distribution $p_{\cdq}$ is user-specified, it is natural to ask how to choose it to reduce variance of the estimator. In particular, we seek a distribution that minimizes the corresponding \(\Gamma\) factor in the variance upper bound.

Theorem~\ref{thm:bestP} provides an explicit solution of the sampling distribution $p_{\cdq}$ that minimizes \(\Gamma\). The following lemma is essential to solving the minimization problem.

\begin{lemma}
\label{lm:minimization}
Let \( m_2 > m_1 \) be positive integers and \( t > 0 \). Consider the constrained minimization
\begin{equation}
\label{eq:minpi}
\begin{aligned}
    \min_{\{p_i\}_{i=m_1}^{m_2}} \quad &\sum_{i=m_1}^{m_2} \frac{t^i}{p_i} \qquad
    \textrm{s.t.} \quad \sum\limits_{i=m_1}^{m_2} p_i = 1, \quad p_i > 0.
\end{aligned}
\end{equation}
The minimum is achieved at
\begin{equation}
\label{eq:bestpi}
    p_i = \frac{t^{i/2}}{\sum\limits_{l=m_1}^{m_2} t^{l/2}}, \qquad i=m_1,\dots,m_2,
\end{equation}
with minimal value
\begin{equation}
\label{eq:minvalue}
   \frac{t^{m_1} \left(t^{(m_2 - m_1 + 1)/2} - 1\right)^2}{(t^{1/2} - 1)^2}.
\end{equation}
\end{lemma}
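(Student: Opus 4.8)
The plan is to solve this constrained minimization using Lagrange multipliers, which is the natural tool for minimizing a sum subject to a single equality constraint. The key observation is that the objective $\sum_{i=m_1}^{m_2} t^i/p_i$ is strictly convex in each $p_i$ on the positive orthant (since $t^i/p_i$ has second derivative $2t^i/p_i^3 > 0$), and the feasible set defined by $\sum p_i = 1$, $p_i > 0$ is convex. Hence any critical point of the Lagrangian is the unique global minimizer, so it suffices to find a stationary point and verify feasibility.

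**Carrying out the computation**

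First I would form the Lagrangian $\mathcal{L} = \sum_{i=m_1}^{m_2} t^i/p_i + \lambda\big(\sum_{i=m_1}^{m_2} p_i - 1\big)$ and set $\partial \mathcal{L}/\partial p_i = -t^i/p_i^2 + \lambda = 0$. This immediately gives $p_i = t^{i/2}/\sqrt{\lambda}$, so each optimal $p_i$ is proportional to $t^{i/2}$. Imposing the normalization constraint $\sum_{i=m_1}^{m_2} p_i = 1$ fixes the constant and yields the claimed formula \eqref{eq:bestpi}, namely $p_i = t^{i/2}\big/\sum_{l=m_1}^{m_2} t^{l/2}$. Note $\lambda > 0$ is forced, so the solution lies in the interior $p_i > 0$ as required.

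**Evaluating the minimal value**

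Substituting the optimal $p_i$ back into the objective, I would compute
\[
\sum_{i=m_1}^{m_2} \frac{t^i}{p_i}
= \sum_{i=m_1}^{m_2} t^i \cdot \frac{\sum_{l=m_1}^{m_2} t^{l/2}}{t^{i/2}}
= \Big(\sum_{i=m_1}^{m_2} t^{i/2}\Big)\Big(\sum_{l=m_1}^{m_2} t^{l/2}\Big)
= \Big(\sum_{i=m_1}^{m_2} t^{i/2}\Big)^2.
\]
The remaining step is to evaluate the geometric sum $\sum_{i=m_1}^{m_2} t^{i/2} = t^{m_1/2}\frac{t^{(m_2-m_1+1)/2}-1}{t^{1/2}-1}$ (valid since $t \ne 1$; the degenerate case $t=1$ can be handled by a limiting argument or noted separately), then square it to obtain \eqref{eq:minvalue}. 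This last part is a routine algebraic simplification rather than a genuine difficulty.

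**The main obstacle**

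The computation is essentially mechanical, so there is no serious analytic obstacle; the only point requiring care is \emph{justifying} that the stationary point is a global minimum rather than merely a critical point. I would address this by invoking convexity of the objective over the convex feasible region, which guarantees uniqueness and globality of the Lagrange solution. A secondary subtlety is the implicit assumption $t \ne 1$ in the closed-form geometric sum; I would either assume $t>0$, $t\ne 1$ throughout (consistent with $t=\varrho^2$ or $t=\varrho^4$ where $\varrho\in[0,1)$ strictly, giving $t<1$), or remark that \eqref{eq:minvalue} extends continuously to the case $t=1$.
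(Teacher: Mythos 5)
Your proof is correct, but it takes a genuinely different route from the paper. You use Lagrange multipliers to find the stationary point $p_i \propto t^{i/2}$ and then invoke strict convexity of the objective on the positive orthant (with an affine constraint) to certify that this critical point is the unique global minimizer. The paper instead writes $a_i = \sqrt{t^i/p_i}$, $b_i = \sqrt{p_i}$, notes that the objective equals $\bigl(\sum_i a_i^2\bigr)\bigl(\sum_i b_i^2\bigr)$ by the constraint, and applies the Cauchy--Schwarz inequality to get the lower bound $\bigl(\sum_i t^{i/2}\bigr)^2$ directly, with the equality case $a_i = c\,b_i$ yielding the same optimal distribution. The two approaches buy different things: the Cauchy--Schwarz argument is fully self-contained and settles global optimality and the equality condition in a single stroke, with no need to discuss KKT conditions, existence of a minimizer on the non-compact feasible set, or convexity; your variational approach is more mechanical and systematic, and would extend more readily to objectives where no clever algebraic pairing is available, but it carries the burden of the convexity justification --- which you correctly supply, so there is no gap. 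Your remark on the degenerate case $t=1$ (where the closed-form geometric sum $(t^{1/2}-1)^{-2}$ breaks down) is a point the paper glosses over as well; since the lemma is only applied with $t = \varrho^2$ or $t = \varrho^4$ for $\varrho \in [0,1)$, this is harmless in context, but your explicit handling of it is slightly more careful than the paper's.
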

\begin{proof}
    Define $a_i=\sqrt{\frac{t^i}{p_i}}$ and $b_i=\sqrt{p_i}$.
    The sum in \eqref{eq:minpi} can be written as 
    $$ \sum_{i=m_1}^{m_2} \frac{t^i}{p_i} = \left( \sum_{i=m_1}^{m_2} a_i^2 \right)\cdot 1 = \left( \sum_{i=m_1}^{m_2} a_i^2 \right)\left( \sum_{i=m_1}^{m_2} p_i\right)=\left( \sum_{i=m_1}^{m_2} a_i^2 \right)\left( \sum_{i=m_1}^{m_2} b_i^2\right).$$
    It follows from Cauchy--Schwarz inequality 
    $\left(\sum\limits_i a_i^2\right)\left(\sum\limits_i b_i^2\right)\geq \left(\sum\limits_i a_i b_i\right)^2$
    that 
    \begin{equation*}
        \sum_{i=m_1}^{m_2} \frac{t^i}{p_i}\geq \left( \sum_{i=m_1}^{m_2} a_i b_i\right)^2 = \left( \sum_{i=m_1}^{m_2} t^{i/2}\right)^2=\frac{t^{m_1}(t^{(m_2-m_1+1)/2}-1)^2}{(t^{1/2}-1)^2},
    \end{equation*}
    which is independent of the choice of $p_i$'s.
    The minimum is achieved when there is a constant $c$ such that $a_i=cb_i$ for all $i$, i.e. $\sqrt{\frac{t^i}{p_i}}=c\sqrt{p_i}$,
    or equivalently, $p_i=c^{-1}t^{i/2}.$
    Using the constraint $\sum\limits_{i=m_1}^{m_2}p_i=1$, we compute that $c$ is the normalization constant:
    $$c = \sum_{i=m_1}^{m_2}t^{i/2}.$$
    Therefore, we conclude that the value in \eqref{eq:minvalue} is the minimum and can be obtained by choosing 
    $p_i=c^{-1}t^{i/2}$ with $c$ defined above. 
    This proves \eqref{eq:minvalue} and \eqref{eq:bestpi}.
\end{proof}

\begin{theorem}
\label{thm:bestP}
For $\Gamma_{\emph{Solve}}$ in \eqref{eq:GammaSolve} and $\Gamma_{\emph{LogQF}}$ in \eqref{eq:GammaLogQF}, the minimum over all possible sampling distribution $p_{\cdq}$ is as follows.
\begin{equation}
\label{eq:minGamma}
\begin{aligned}
    \min_{p_{\cdq}} \Gamma_{\emph{Solve}} &= \frac{\varrho_{\emph{Solve}}^{2(\imin-1)} \left(\varrho_{\emph{Solve}}^{\imax-\imin+1} - 1\right)^2}{(\varrho_{\emph{Solve}} - 1)^2};
    \\
    \min_{p_Q} \Gamma_{\emph{LogQF}} &= 
    \frac{\varrho_{\emph{LogQF}}^{4(\imin-1)} \left(\varrho_{\emph{LogQF}}^{2(\imax-\imin+1)} - 1\right)^2}{(\varrho_{\emph{LogQF}}^2 - 1)^2}.
\end{aligned}
\end{equation}
$\min_{p_Q}\Gamma_{\emph{Solve}}$ is achieved at the distribution
\begin{equation}
\label{eq:pSolve-opt}
    \mathbb{P}(Q=i) = \frac{\varrho_{\emph{Solve}}^i}{\sum\limits_{l=\imin}^{\imax} \varrho_{\emph{Solve}}^l}=\frac{\left(\tfrac{\sqrt{\kappa} - 1}{\sqrt{\kappa} + 1}\right)^i}{\sum\limits_{l=\imin}^{\imax} \left(\tfrac{\sqrt{\kappa} - 1}{\sqrt{\kappa} + 1}\right)^l}\qquad (\imin\leq i \leq \imax);
\end{equation}
$\min_{p_Q}\Gamma_{\emph{LogQF}}$ is achieved at the distribution
\begin{equation}
\label{eq:pLogQF-opt}
    \mathbb{P}(Q=i) = \frac{\varrho_{\emph{LogQF}}^{2i}}{\sum\limits_{l=\imin}^{\imax} \varrho_{\emph{LogQF}}^{2l}}=
\frac{\left(\tfrac{\sqrt{\kappa + 1} - 1}{\sqrt{\kappa + 1} + 1}\right)^{2i}}{\sum\limits_{l=\imin}^{\imax} \left(\tfrac{\sqrt{\kappa + 1} - 1}{\sqrt{\kappa + 1} + 1}\right)^{2l}}\qquad (\imin\leq i\leq \imax).
\end{equation}
\end{theorem}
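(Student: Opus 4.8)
The plan is to reduce both minimizations to the single template already solved in Lemma~\ref{lm:minimization}, which minimizes $\sum_i t^i/p_i$ over the probability simplex. Each $\Gamma$ factor in \eqref{eq:GammaSolve}--\eqref{eq:GammaLogQF} is a weighted sum of inverse probabilities, so the entire task is to absorb the shifted geometric weights into the canonical form $t^i$ and then invoke the lemma verbatim, reading off both the minimizing distribution and the minimal value.

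For $\Gamma_{\emph{Solve}}$ I would factor the numerator as $\varrho_{\emph{Solve}}^{2(j-1)} = \varrho_{\emph{Solve}}^{-2}\,(\varrho_{\emph{Solve}}^{2})^{j}$, so that
\[
\Gamma_{\emph{Solve}} \;=\; \varrho_{\emph{Solve}}^{-2}\sum_{j=\imin}^{\imax} \frac{(\varrho_{\emph{Solve}}^{2})^{j}}{\mathbb{P}(\cdq=j)}.
\]
Applying Lemma~\ref{lm:minimization} with $t=\varrho_{\emph{Solve}}^{2}$, $m_1=\imin$, $m_2=\imax$ gives the minimizer $p_i = t^{i/2}/\sum_l t^{l/2} = \varrho_{\emph{Solve}}^{\,i}/\sum_l \varrho_{\emph{Solve}}^{\,l}$, which is exactly \eqref{eq:pSolve-opt}, and an inner minimum $t^{\imin}(t^{(\imax-\imin+1)/2}-1)^2/(t^{1/2}-1)^2$. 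Substituting $t^{1/2}=\varrho_{\emph{Solve}}$ and multiplying by the prefactor $\varrho_{\emph{Solve}}^{-2}$ collapses $\varrho_{\emph{Solve}}^{-2}\cdot\varrho_{\emph{Solve}}^{2\imin}$ into $\varrho_{\emph{Solve}}^{2(\imin-1)}$, which yields the first line of \eqref{eq:minGamma}.

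The LogQF case is structurally identical but carries a quartic decay: I would write $\varrho_{\emph{LogQF}}^{4(j-1)} = \varrho_{\emph{LogQF}}^{-4}\,(\varrho_{\emph{LogQF}}^{4})^{j}$ and invoke the lemma with $t=\varrho_{\emph{LogQF}}^{4}$. The minimizer becomes $p_i = t^{i/2}/\sum_l t^{l/2} = \varrho_{\emph{LogQF}}^{2i}/\sum_l \varrho_{\emph{LogQF}}^{2l}$, matching \eqref{eq:pLogQF-opt}, while the prefactor $\varrho_{\emph{LogQF}}^{-4}$ together with $t^{1/2}=\varrho_{\emph{LogQF}}^{2}$ and $t^{\imin}=\varrho_{\emph{LogQF}}^{4\imin}$ produces the second line of \eqref{eq:minGamma} after the cancellation $\varrho_{\emph{LogQF}}^{-4}\cdot\varrho_{\emph{LogQF}}^{4\imin}=\varrho_{\emph{LogQF}}^{4(\imin-1)}$.

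No step presents a genuine obstacle, since the result is a direct specialization of the lemma. The only point requiring care is bookkeeping of exponents---correctly identifying $t$ as $\varrho^{2}$ in the Solve case versus $\varrho^{4}$ in the LogQF case, and tracking the constant $\varrho^{-2}$ or $\varrho^{-4}$ pulled outside the sum---so that the shifted index $(j-1)$ in the numerator lands cleanly on the $(\imin-1)$ exponent of the final expression. I would also verify in passing that the lemma's hypotheses hold, namely $\imin<\imax$ (assumed throughout) and $t>0$, which follows from $\varrho\in(0,1)$ whenever $\kappa>1$.
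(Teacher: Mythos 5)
Your proposal is correct and matches the paper's own proof, which likewise obtains both results by applying Lemma~\ref{lm:minimization} with $t=\varrho_{\mathrm{Solve}}^{2}$ and $t=\varrho_{\mathrm{LogQF}}^{4}$, respectively. Your explicit bookkeeping of the prefactors $\varrho^{-2}$ and $\varrho^{-4}$ (which the paper leaves implicit) and the exponent cancellations is accurate, so the argument goes through exactly as you outline.
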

\begin{proof}
    The result follows directly by applying Lemma \ref{lm:minimization} to \eqref{eq:GammaSolve} with $t=\varrho_{\emph{Solve}}^2$ in Lemma \ref{lm:minimization} and to \eqref{eq:GammaLogQF} with
    $t=\varrho_{\emph{LogQF}}^4$ in Lemma \ref{lm:minimization}.
\end{proof}

We call the distributions in \eqref{eq:pSolve-opt} and \eqref{eq:pLogQF-opt} \emph{$\Gamma$-optimal} (for the variance upper bounds), since they minimize the corresponding $\Gamma$ factors for fixed $(i_{\min}, i_{\max})$ and a given conditioning $\kappa$.
As illustrated in Figure~\ref{fig:GammaCompare}, the $\Gamma$-optimal choice can reduce $\Gamma$ by orders of magnitude compared with commonly used geometric distributions that do not account for conditioning~\cite{pmlr-v139-potapczynski21a}.

{The $\Gamma$-optimal distributions \eqref{eq:pSolve-opt}--\eqref{eq:pLogQF-opt} depend on $\kappa$ and minimize the \emph{variance upper bound} through the corresponding $\Gamma$ factor.
We therefore view them primarily as (i) a theoretical benchmark and (ii) guidance on how the sampling tail should scale with conditioning.
In practice, the effective condition number of the (preconditioned) operator may be unknown a priori and may vary across hyperparameters, so plugging in an inexact $\widehat{\kappa}$ yields a distribution that is not exactly $\Gamma$-optimal.
The consequence of such mismatch is typically a tail that is slightly too light or too heavy; empirically, once preconditioning is reasonably effective, the estimator variance becomes much less sensitive to the precise choice of $p_Q$, so simple fixed geometric distributions are often sufficient. When the preconditioner is poor (or absent) and one wishes to tune $p_Q$, a rough condition surrogate can be obtained with essentially no additional MVP cost from the same Krylov/Lanczos run: the extremal Ritz values of the small tridiagonal $\mathbf T_j$ provide estimates $\widehat{\lambda}_{\min}$ and $\widehat{\lambda}_{\max}$, hence $\widehat{\kappa}\approx \widehat{\lambda}_{\max}/\widehat{\lambda}_{\min}$.
One may then either plug $\widehat{\kappa}$ into \eqref{eq:pSolve-opt}--\eqref{eq:pLogQF-opt} or simply tune the decay rate of a geometric distribution.
}

With the optimal solution for the $\Gamma$ factor, as shown below, we can optimize the variance estimates for the TSS-Solve and TSS-LogQF estimators and derive upper bounds that only involve the condition number $\kappa$ and the truncation parameters $\imin,\imax$.

\begin{figure}
    \centering
    \includegraphics[width=0.5\linewidth]{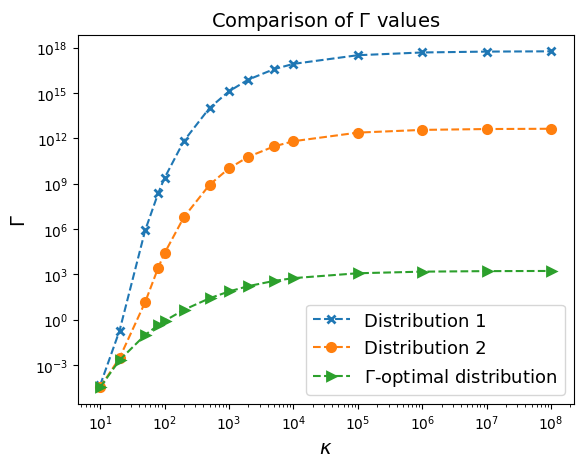}
    \caption{Comparison of $\Gamma_{\textrm{Solve}}$ values in \eqref{eq:GammaSolve} for different distributions $p_Q$: two commonly used distributions $\mathbb{P}_1(Q=j)\propto e^{-0.5j}$ and $\mathbb{P}_2(Q=j)\propto 2^{-j}$ versus the $\Gamma$-optimal distribution in \eqref{eq:pSolve-opt}.}
    \label{fig:GammaCompare}
\end{figure}

\begin{theorem}
\label{thm:VarOptimal}
    Denote $i_1=\imin$ and $i_2=\imax$.
    With the $\Gamma$-optimal choice of $p_Q$ in Theorem \ref{thm:bestP} for the TSS-Solve and TSS-LogQF estimators, the respective variance satisfies 
    \begin{equation*}
    \begin{aligned}
        \vvbb\big(\cdphiss^{\emph{(Solve)}}\big) \;& \leq\; 4\,\cdf{\kappa}\,\|\mathbf{x}\|^{2}\; {\left(\tfrac{\sqrt{\kappa} - 1}{\sqrt{\kappa} + 1}\right)^{2(i_1-1)} \left(\left(\tfrac{\sqrt{\kappa} - 1}{\sqrt{\kappa} + 1}\right)^{i_2-i_1+1} - 1\right)^2}(\sqrt{\kappa}+1)^2 \\
        \vvbb\big(\cdphiss^{\emph{(LogQF)}}\big) \;& \leq \; \tfrac{\big(\sqrt{\kappa+1}+1\big)^6\log^2(2\kappa)}{(\kappa+1)} \;{\left(\tfrac{\sqrt{\kappa + 1} - 1}{\sqrt{\kappa + 1} + 1}\right)^{4(i_1-1)} \left(\left(\tfrac{\sqrt{\kappa + 1} - 1}{\sqrt{\kappa + 1} + 1}\right)^{2(i_2-i_1+1)} - 1\right)^2}
    \end{aligned}
    \end{equation*}
\end{theorem}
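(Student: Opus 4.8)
The plan is to combine the two generic variance bounds already proved—Theorem~\ref{thm:Var-CG} for TSS-Solve and Theorem~\ref{thm:Var-logdet} for TSS-LogQF—with the closed-form minima of the $\Gamma$ factor from Theorem~\ref{thm:bestP}. Each of those mean–variance theorems bounds the variance by a spectral prefactor times $\Gamma$, and that bound holds for \emph{every} admissible sampling distribution $p_{\cdq}$; in particular it holds for the $\Gamma$-optimal choice. Substituting the corresponding $\min_{p_{\cdq}}\Gamma$ from \eqref{eq:minGamma} into these prefactor-times-$\Gamma$ bounds therefore yields the sharpest variance estimate attainable within the family, and the remaining work is purely algebraic simplification of the denominators $(\varrho_{\mathrm{Solve}}-1)^2$ and $(\varrho_{\mathrm{LogQF}}^2-1)^2$.

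For TSS-Solve, I would start from $\vvbb\big(\cdphiss^{(\mathrm{Solve})}\big) \le 16\kappa^2\|\mathbf{x}\|^2\,\Gamma_{\mathrm{Solve}}$ and insert $\min_{p_{\cdq}}\Gamma_{\mathrm{Solve}}$ from \eqref{eq:minGamma}. The key simplification is that $\varrho_{\mathrm{Solve}}-1 = \tfrac{\sqrt{\kappa}-1}{\sqrt{\kappa}+1}-1 = \tfrac{-2}{\sqrt{\kappa}+1}$, so $(\varrho_{\mathrm{Solve}}-1)^2 = \tfrac{4}{(\sqrt{\kappa}+1)^2}$. Replacing the denominator in $\min_{p_{\cdq}}\Gamma_{\mathrm{Solve}}$ by this value converts the prefactor $16\kappa^2\|\mathbf{x}\|^2$ into $4\kappa^2\|\mathbf{x}\|^2(\sqrt{\kappa}+1)^2$, leaving the geometric factors $\varrho_{\mathrm{Solve}}^{2(i_1-1)}\big(\varrho_{\mathrm{Solve}}^{i_2-i_1+1}-1\big)^2$ intact, which is exactly the claimed bound.

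For TSS-LogQF, I would similarly begin from $\vvbb\big(\cdphiss^{(\mathrm{LogQF})}\big) \le 16(\sqrt{\kappa+1}+1)^2\log^2(2\kappa)\,\Gamma_{\mathrm{LogQF}}$ and insert $\min_{p_{\cdq}}\Gamma_{\mathrm{LogQF}}$ from \eqref{eq:minGamma}. Writing $s=\sqrt{\kappa+1}$, the denominator simplifies via $\varrho_{\mathrm{LogQF}}^2-1 = \tfrac{(s-1)^2-(s+1)^2}{(s+1)^2} = \tfrac{-4s}{(s+1)^2}$, hence $(\varrho_{\mathrm{LogQF}}^2-1)^2 = \tfrac{16(\kappa+1)}{(\sqrt{\kappa+1}+1)^4}$. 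Its reciprocal, multiplied into the prefactor $16(\sqrt{\kappa+1}+1)^2\log^2(2\kappa)$, collapses the two factors of $16$ and produces $\tfrac{(\sqrt{\kappa+1}+1)^6\log^2(2\kappa)}{\kappa+1}$, again multiplied by the unchanged geometric factors $\varrho_{\mathrm{LogQF}}^{4(i_1-1)}\big(\varrho_{\mathrm{LogQF}}^{2(i_2-i_1+1)}-1\big)^2$, matching the stated estimate.

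The proof presents no conceptual obstacle, since both halves reduce to substitution. The only place demanding care is keeping the two distinct decay exponents straight—the quadratic decay $\varrho_{\mathrm{Solve}}^{2(\cdot)}$ for Solve versus the quartic decay $\varrho_{\mathrm{LogQF}}^{4(\cdot)}$ for LogQF—and correctly pairing the denominator $(\varrho-1)^2$ (Solve) against $(\varrho^2-1)^2$ (LogQF), as dictated by the substitutions $t=\varrho_{\mathrm{Solve}}^2$ and $t=\varrho_{\mathrm{LogQF}}^4$ invoked when applying Lemma~\ref{lm:minimization} inside Theorem~\ref{thm:bestP}.
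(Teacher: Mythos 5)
Your proposal is correct and follows essentially the same route as the paper's own proof: substitute the $\Gamma$-optimal minima from Theorem~\ref{thm:bestP} into the variance bounds \eqref{eq:phiCGVar} and \eqref{eq:philogdetVar}, then simplify the denominators $(\varrho_{\mathrm{Solve}}-1)^2=\tfrac{4}{(\sqrt{\kappa}+1)^2}$ and $(\varrho_{\mathrm{LogQF}}^2-1)^2=\tfrac{16(\kappa+1)}{(\sqrt{\kappa+1}+1)^4}$. Your algebra matches the paper's (and is in fact spelled out in slightly more detail), so there is nothing to correct.
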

\begin{proof}
This can be proved by inserting the optimal $\Gamma$-values in Theorem \ref{thm:bestP} into the variance estimates in \eqref{eq:phiCGVar} and \eqref{eq:philogdetVar}, respectively, where
the denominators in \eqref{eq:minGamma}, i.e.
$$\left(\frac{\sqrt{\kappa} - 1}{\sqrt{\kappa} + 1}- 1\right)^2
\quad\text{and}\quad 
\left(\left(\frac{\sqrt{\kappa + 1} - 1}{\sqrt{\kappa + 1} + 1}\right)^2 - 1\right)^2,$$ 
can be simplified as
$$
\frac{4}{(\sqrt{\kappa}+1)^2}
\quad\text{and}\quad
\frac{16(\kappa+1)}{\left(\sqrt{\kappa+1}+1\right)^4}.
$$
The proof is complete.
\end{proof}

To obtain a more straightforward dependence on $\kappa$ for the bounds in Theorem \ref{thm:VarOptimal}, we consider the asymptotic behavior of the bounds as $\kappa\to\infty$, indicating the worst-case scenario for ill-conditioned problems. \cdf{The result is stated below.}

\begin{theorem}[$\kappa$-asymptotic estimates]
    Let $\Gamma_{\emph{Solve}}$ and $\Gamma_{\emph{LogQF}}$ denote the optimal $\Gamma$ values in \eqref{eq:minGamma}.
    Then as $\kappa\to\infty$,
    $$\Gamma_{\emph{Solve}} = \mathcal{O}(1)\quad\text{and}\quad \Gamma_{\emph{LogQF}} = \mathcal{O}(1).$$
    Furthermore, 
    as $\kappa\to\infty$,
    we have 
    \begin{equation}
    \label{eq:VarOK}
    \vvbb\big(\cdphiss^{\emph{(Solve)}}\big) = \mathcal{O}(\cdf{\kappa})\quad\text{and}\quad \vvbb\big(\cdphiss^{\emph{(LogQF)}}\big) = \mathcal{O}(\kappa\log^2\kappa).
    \end{equation}
\end{theorem}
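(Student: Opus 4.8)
The plan is to reduce both statements to the behavior of the optimal $\Gamma$ factors as $\kappa\to\infty$, and then to multiply by the explicit prefactors already recorded in the raw variance bounds \eqref{eq:phiCGVar} and \eqref{eq:philogdetVar}. First I would observe that in both cases the decay parameter tends to $1$: for Solve, $\varrho_{\text{Solve}}=\frac{\sqrt{\kappa}-1}{\sqrt{\kappa}+1}=1-\frac{2}{\sqrt{\kappa}+1}\to 1$, and similarly $\varrho_{\text{LogQF}}=\frac{\sqrt{\kappa+1}-1}{\sqrt{\kappa+1}+1}\to 1$. The only apparent difficulty is that the optimal $\Gamma$ values in \eqref{eq:minGamma} take the indeterminate form $0/0$ in this limit, since both the numerator $(\varrho^{N}-1)^{2}$ and the denominator $(\varrho-1)^{2}$ vanish.

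The key step that resolves this is the elementary geometric-series identity
\[
\frac{\varrho^{N}-1}{\varrho-1}=\sum_{k=0}^{N-1}\varrho^{k}\;\xrightarrow[\varrho\to 1]{}\;N,
\]
valid for any fixed integer $N$. Writing $N=\imax-\imin+1$ and applying this to \eqref{eq:minGamma}, I would rewrite the optimal factors as
\[
\Gamma_{\text{Solve}}=\varrho_{\text{Solve}}^{2(\imin-1)}\left(\frac{\varrho_{\text{Solve}}^{N}-1}{\varrho_{\text{Solve}}-1}\right)^{2},\qquad
\Gamma_{\text{LogQF}}=\varrho_{\text{LogQF}}^{4(\imin-1)}\left(\frac{\varrho_{\text{LogQF}}^{2N}-1}{\varrho_{\text{LogQF}}^{2}-1}\right)^{2}.
\]
Since $\imin,\imax$ are fixed, the prefactors $\varrho_{\text{Solve}}^{2(\imin-1)}$ and $\varrho_{\text{LogQF}}^{4(\imin-1)}$ tend to $1$. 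For the Solve bracket the identity gives the limit $N$ directly; for the LogQF bracket I would set $\sigma=\varrho_{\text{LogQF}}^{2}\to 1$ so that $\frac{\varrho_{\text{LogQF}}^{2N}-1}{\varrho_{\text{LogQF}}^{2}-1}=\frac{\sigma^{N}-1}{\sigma-1}\to N$. Hence both optimal factors converge to $N^{2}=(\imax-\imin+1)^{2}$, a constant independent of $\kappa$, establishing $\Gamma_{\text{Solve}}=\mathcal{O}(1)$ and $\Gamma_{\text{LogQF}}=\mathcal{O}(1)$.

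For the variance estimates \eqref{eq:VarOK} I would then insert these $\mathcal{O}(1)$ bounds into the raw variance inequalities. From \eqref{eq:phiCGVar}, $\vvbb(\cdphiss^{\text{(Solve)}})\le 16\kappa^{2}\|\mathbf{x}\|^{2}\,\Gamma_{\text{Solve}}$, so with $\Gamma_{\text{Solve}}=\mathcal{O}(1)$ and $\|\mathbf{x}\|$ fixed the variance is $\mathcal{O}(\kappa^{2})$. For LogQF, \eqref{eq:philogdetVar} gives $\vvbb(\cdphiss^{\text{(LogQF)}})\le 16(\sqrt{\kappa+1}+1)^{2}\log^{2}(2\kappa)\,\Gamma_{\text{LogQF}}$; since $(\sqrt{\kappa+1}+1)^{2}=\mathcal{O}(\kappa)$, $\log^{2}(2\kappa)=\mathcal{O}(\log^{2}\kappa)$, and $\Gamma_{\text{LogQF}}=\mathcal{O}(1)$, the product is $\mathcal{O}(\kappa\log^{2}\kappa)$, as claimed.

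I do not expect a genuine obstacle in this argument. The one subtlety is the indeterminate $0/0$ limit of the optimal $\Gamma$ values, which is dispatched cleanly by the geometric-series identity once the factor $(\varrho^{N}-1)/(\varrho-1)$ is isolated; everything else is bookkeeping of the explicit $\kappa$-dependence of the two prefactors. The main care required is simply to treat $\imin$, $\imax$, and $\|\mathbf{x}\|$ as $\kappa$-independent constants throughout the asymptotic analysis.
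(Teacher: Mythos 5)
Your proof is correct, and it follows the same two-step outline as the paper's own argument: first show that the optimal $\Gamma$ factors are $\mathcal{O}(1)$, then multiply by the explicit $\kappa$-dependent prefactors in \eqref{eq:phiCGVar} and \eqref{eq:philogdetVar} to get \eqref{eq:VarOK}. The difference lies in how the indeterminate $0/0$ limit of \eqref{eq:minGamma} is resolved. The paper expands $\varrho^m=\bigl(1-\tfrac{2}{\sqrt{\kappa}+1}\bigr)^m\approx 1-\tfrac{2m}{\sqrt{\kappa}+1}$ to first order and argues that the numerator $(\varrho^{\imax-\imin+1}-1)^2$ and the denominator $(\varrho-1)^2$ are both of order $(\sqrt{\kappa}+1)^{-2}$, so their ratio is $\mathcal{O}(1)$; this is an informal asymptotic argument (the $\approx$ carries no error control). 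You instead isolate the exact geometric sum $\tfrac{\varrho^{N}-1}{\varrho-1}=\sum_{k=0}^{N-1}\varrho^{k}$ with $N=\imax-\imin+1$ (and $\sigma=\varrho_{\mathrm{LogQF}}^{2}$ in the second case), which buys you strictly more: since $\varrho\in[0,1)$, the sum is at most $N$ for \emph{every} $\kappa$, so you obtain the uniform bound $\Gamma\le(\imax-\imin+1)^{2}$ rather than only an asymptotic one, together with the exact limit $(\imax-\imin+1)^{2}$ as $\kappa\to\infty$. The remaining bookkeeping for \eqref{eq:VarOK} --- $16\kappa^{2}\|\mathbf{x}\|^{2}\Gamma=\mathcal{O}(\kappa^{2})$ and $16(\sqrt{\kappa+1}+1)^{2}\log^{2}(2\kappa)\,\Gamma=\mathcal{O}(\kappa\log^{2}\kappa)$ --- is identical in both arguments.
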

\begin{proof}
    As $\kappa\to\infty$, we have
$$\varrho_{\textrm{Solve}}^{m} = \left(\frac{\sqrt{\kappa} - 1}{\sqrt{\kappa} + 1}\right)^{m}
=\left(1-\frac{2}{\sqrt{\kappa}+1}\right)^{m} 
\approx 1-\frac{2m}{\sqrt{\kappa}+1}.$$
Therefore,
\begin{equation*}
\begin{aligned}
    \left(\frac{\sqrt{\kappa} - 1}{\sqrt{\kappa} + 1}\right)^{2(\imin-1)} 
    &\approx 1-\frac{4(\imin-1)}{\sqrt{\kappa}+1},\\
\left(\frac{\sqrt{\kappa} - 1}{\sqrt{\kappa} + 1}\right)^{\imax-\imin+1} 
&\approx 1-\frac{2(\imax-\imin+1)}{\sqrt{\kappa}+1}.
\end{aligned}
\end{equation*}
Then it can be computed that, as $\kappa\to\infty$, 
$$\Gamma_{\textrm{Solve}} = \mathcal{O}\left( \frac{1\cdot \frac{1}{(\sqrt{\kappa}+1)^2}}{\frac{1}{(\sqrt{\kappa}+1)^2}}\right)=\mathcal{O}(1).$$
Similarly, as $\kappa\to\infty$,
$$\varrho_{\textrm{LogQF}}^m=\left(\frac{\sqrt{\kappa + 1} - 1}{\sqrt{\kappa + 1} + 1}\right)^{m} \approx 1 - \frac{2m}{\sqrt{\kappa + 1} + 1}.$$
The same analysis yields that 
$$\Gamma_{\textrm{LogQF}} = \mathcal{O}(1).$$
Since the $\Gamma$ factors are both $\mathcal{O}(1)$,
the asymptotic estimates for the variance bounds of the two estimators follow immediately from \eqref{eq:phiCGVar} and \eqref{eq:philogdetVar} where $\Gamma$ is a prefactor to the $\kappa$ terms. 
The proof is complete.
\end{proof}

\begin{figure}
    \centering
    \includegraphics[width=0.46\linewidth]{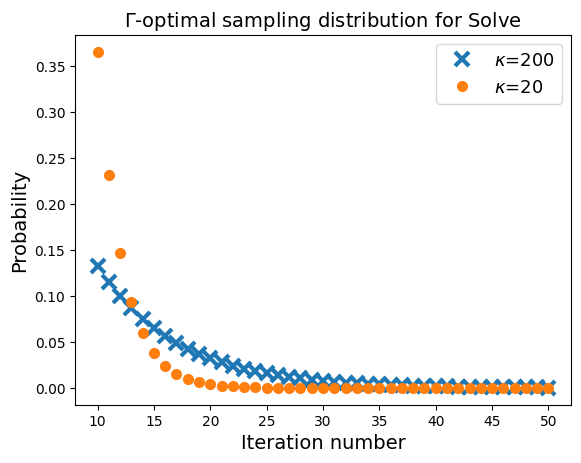}
    \includegraphics[width=0.46\linewidth]{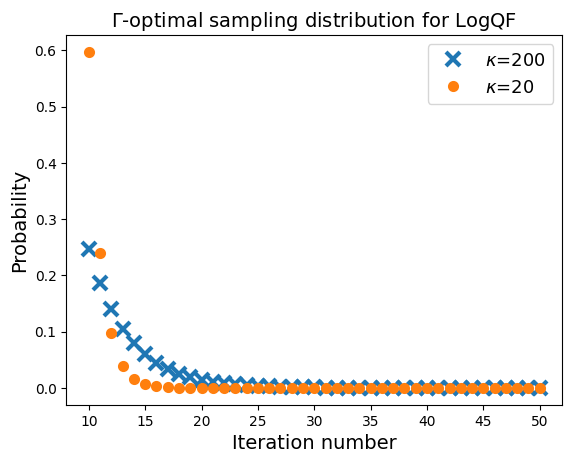}
    \caption{$\Gamma$-optimal sampling distribution has lighter tail for smaller condition number $\kappa$. Left: TSS-Solve; Right: TSS-LogQF.}
    \label{fig:p-curve-kappa}
\end{figure}

\paragraph{Impact of conditioning}
From the theoretical results in Section \ref{sub:TSS-CG} and Section \ref{sub:TSS-Lanczos}, we see that a smaller condition number $\kappa$ helps to reduce the variance of the underlying stochastic estimator.
On the other hand, Theorem \ref{thm:bestP} implies that conditioning is important for reducing the number of iterations.
To see this, consider the $\Gamma$-optimal choice of $p_Q$. 
It can be seen from Theorem \ref{thm:bestP} that smaller $\kappa$ yields smaller $\varrho$, which will then produce smaller tails in $p_Q$, namely, smaller $\mathbb{P}(Q=j)$ for large $j$'s. 
This means that it is more likely to sample a smaller iteration number when the condition number $\kappa$ is smaller.
An example is shown in Figure \ref{fig:p-curve-kappa} where we compare the $\Gamma$-optimal distribution under two different condition numbers $\kappa=20$ (orange dots) and $\kappa=200$ (blue `x'). It is easy to see that for both TSS-Solve and TSS-LogQF, $\kappa=20$ produces a much higher chance to sample small iteration numbers compared to $\kappa=200$. 
Therefore, we see that conditioning is important for not only a variance-reduced estimator, but also lower computational cost.

\section{Numerically stable implementation of preconditioned TSS estimators}
\label{subsec:stable-precond-lanczos}

The TSS estimators introduced in Section~\ref{sec:tss} are effective only when the underlying Krylov recurrences remain numerically stable. In finite precision arithmetic,  CG may suffer from loss of $\mathbf{A}$-orthogonality and inaccurate recurrence coefficients. These instabilities degrade the quality of the incremental updates $(\mathbf{x}_i - \mathbf{x}_{i-1})$ used by TSS-Solve and the tridiagonal matrices $\mathbf{T}_j$ used in Lanczos quadrature. Consequently, the effectiveness of TSS estimators can be severely compromised in ill-conditioned problems. {Although CG and Lanczos are mathematically equivalent in exact arithmetic, their finite-precision behavior differs in practice. The tridiagonal matrix $\mathbf T_m$ can be reconstructed from CG recurrence coefficients, as is commonly done in GP software; however, loss of orthogonality in the  Krylov basis may lead to inaccurate spectral approximations. For this reason, our PTSS implementation employs an explicit Lanczos process with full or windowed reorthogonalization to robustly form the tridiagonal matrices used in quadrature. Importantly, preconditioning reduces the effective condition number and keeps iteration counts modest, making reorthogonalization affordable in the intended regime.
}

We illustrate this issue using a simple 1D kernel regression example. Consider $100$ data points randomly sampled from $[0, 100]$ and a Gaussian kernel with length-scale $l=1.0$, noise variance $\mu = 0.001$, and scaling factor $f=1.0$. Let $\widehat{\mathbf{K}}$ denote the resulting kernel matrix. To evaluate solver behavior, we solve the system $\widehat{\mathbf{K}}\mathbf{x} = \mathbf{y}$ with $\mathbf{y}$ drawn uniformly randomly from $[-0.5, 0.5]^n$ using both {CG and Lanczos with full reorthogonalization.}

\begin{figure}[htbp]
    \centering
    \includegraphics[width=0.95\linewidth]{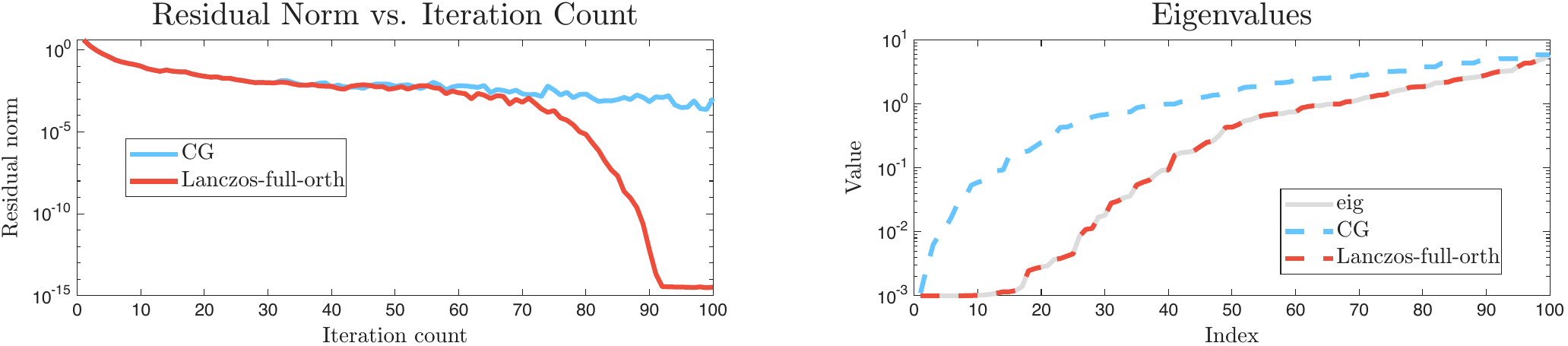}
    \caption{CG vs Lanczos with full reorthogonalization for solving linear systems with $\widehat{\mathbf{K}}$ and for solving eigenvalue problems. The tests use a Gaussian kernel with $\mu=0.001$, $l=1.0$ and $f=1.0$ on a simple 1D dataset with $100$ points randomly distributed in $[0,100]$. The eigenvalue approximations are obtained from the Ritz values of $\mathbf T_{100}$ generated by the CG and full-reorthogonalization Lanczos algorithms.
    }
    \label{fig:cg gmres lanc}
\end{figure}

Figure~\ref{fig:cg gmres lanc} (left) shows that, without preconditioning, CG fails to reach an acceptable residual norm even after 100 iterations. In contrast, the Lanczos method with full orthogonalization achieves substantially better performance. The tridiagonal matrix $\mathbf{T}_{100}$ extracted from CG also fails to accurately represent the spectrum of $\widehat{\mathbf{K}}$, as illustrated in Figure~\ref{fig:cg gmres lanc} (right). These discrepancies persist despite the small problem size, highlighting that although CG-based estimators are theoretically unbiased, they can become unreliable in finite precision due to orthogonality loss. 

This example demonstrates that, in addition to its theoretical role in variance reduction, preconditioning significantly accelerates the convergence of the Lanczos iterations by improving the conditioning of the matrix. This enables the estimator to achieve high accuracy within a small number of steps, effectively minimizing the computational overhead. Consequently, we utilize full reorthogonalization in most experiments as the iteration counts remain low, while also evaluating the windowed variant (which reorthogonalizes against a small window of previous vectors) in specific comparisons.

{In the implementation, we store the Lanczos recurrence coefficients so that $\mathbf T_j$ can be formed for any $j$ at negligible additional cost. For the PTSS estimators used in our main experiments, only a \emph{constant} number of evaluations per probe are required: $s_Q$ and $s_{Q-1}$ (to form $\Delta_Q$), together with the deterministic offset $s_{i_{\min}-1}$. We therefore do \emph{not} compute $s_j = \mathbf e_1^\top f(\mathbf T_j)\mathbf e_1$ for all $j \in [i_{\min}, i_{\max}]$. Each evaluation involves a small $j \times j$ symmetric tridiagonal matrix and costs $\mathcal O(j^3)$ via a standard tridiagonal eigensolver. Since $j$ is modest (typically in the tens), this cost is negligible compared to the dominant matrix–vector products. For variants requiring multiple $j$ values, the same small-tridiagonal cost argument applies; more elaborate incremental updates are unnecessary in our regime.}

\section{Numerical experiments}\label{sec: numerical experiments}

This section evaluates the proposed TSS estimator. We evaluate the Truncated Single-Sample (TSS) estimator in three roles: (a) as a standalone estimator for quantities; (b) as an estimator for Gaussian process (GP) negative log marginal likelihood and its gradients; and (c) embedded in GP hyperparameter optimization.
To isolate where the gains come from, we consider three controlled factors: (a) preconditioning: none (NP) versus AFN-preconditioned (AFN) \cite{Zhao_Xu_Huang_Chow_Xi_2024}; (b) truncation strategy: naive truncated (T) versus TSS; and (c) reorthogonalization window size $i_{\text{orth}}$. At iteration $k$, we reorthogonalize the newly generated Krylov vector against the last $\min\{i_{\text{orth}}-1,\,k-1\}$ previously stored Krylov basis vectors. This yields multiple configurations, denoted as [precond]-[truncation]-[$i_{\text{orth}}$]. For example, NP-T-$n$ denotes the unpreconditioned truncated Lanczos estimator with full reorthogonalization window.
We implemented our methods in \texttt{MATLAB}, and all experiments were performed using \texttt{MATLAB} R2025a.

\subsection{Preconditioned TSS estimator}

We begin our experiments with a simple inverse quadratic form task. Specifically, we estimate the term $\mathbf{y}^\top\widehat{\mathbf{K}}^{-1}\mathbf{y}$ and $\log|\widehat{\mathbf{K}}|$ where $\widehat{\mathbf{K}}$ is a regularized kernel matrix $f^2(\mathbf{K}+\mu \mathbf{I})$ generated with 4,096 points sampled uniformly from the cube $[0,16]^3$, and $\mathbf{y}$ is a vector with i.i.d.\ entries in $[-0.5,0.5]$.
We fix $f=1.0$, $\mu=0.01$, and test with the RBF kernel $\kappa(\mathbf{x},\mathbf{y})=\exp(-||\mathbf{x}-\mathbf{y}||_2^2/2l^2)$ with multiple $l$.
We sweep $l$ over a ``middle-rank'' regime $[1,10]$ in which the eigenvalues of $\widehat{\mathbf{K}}$ decay toward $f^2\mu$ while leaving many intermediate-magnitude eigenvalues, making the resulting linear systems challenging for unpreconditioned CG.

{In all experiments, the truncation window $[i_{\min}, i_{\max}]$ is chosen to reflect a practical Krylov budget rather than the full dimension $n$.
The upper bound $i_{\max}$ represents the largest iteration count that would be considered affordable for a deterministic truncated method to achieve an acceptable accuracy. 
In the preconditioned regime (e.g., with AFN), the effective condition number is significantly reduced, so a modest depth (typically 10–20 iterations) already yields a high-quality approximation.
The lower bound $i_{\min}$ is selected to avoid the high-variance “burn-in” phase of the Krylov process (empirically around 5–10 iterations).
Thus, when $i_{\max}\ll n$, the estimator remains biased relative to the exact solution, but TSS matches the mean of the $i_{\max}$-step deterministic approximation while reducing the expected computational cost.} 

{To make comparisons transparent, we report computational cost primarily in MVPs with the dense kernel matrix.
A deterministic truncated method with $m$ iterations uses exactly $m$ MVPs per probe,
whereas PTSS uses a random number of iterations $Q$ and thus costs $\mathbb E[Q]$ MVPs in expectation.
Because deterministic baselines require an integer iteration budget, we compare PTSS against $m=\lceil \mathbb E[Q]\rceil$ as the closest fixed-cost baseline when appropriate.
}

\subsubsection{TSS vs naive truncation}

In this first experiment, we compare the performance of the deterministic AFN-preconditioned (AFN-T-$n$) estimator with that of the stochastic AFN-preconditioned TSS (AFN-TSS-$n$) to evaluate the effectiveness of TSS in inverse quadratic form $\mathcal{Q}$ estimation. 
We fix both the rank and Schur complement fill level for AFN to 32.
For TSS, we choose $\mathbb{P}({Q}=j)\propto e^{-0.5j}$ in this experiment and set $i_{\min}=5$ and $i_{\max}=10$. For each length-scale we estimate 10,000 times and report the mean.
We also report the error obtained via AFN-T-$n$ with different truncations, including $i_{\min}$, $\lceil\mathbb{E}({Q})\rceil$ (rounded up), and $i_{\max}$.
Note that the expected cost of AFN-TSS-$n$ is close to, but lower than AFN-T-$n$ with $\mathbb{E}({Q})$ iterations.

\begin{figure}[htbp]
    \centering
    \includegraphics[width=0.95\linewidth]{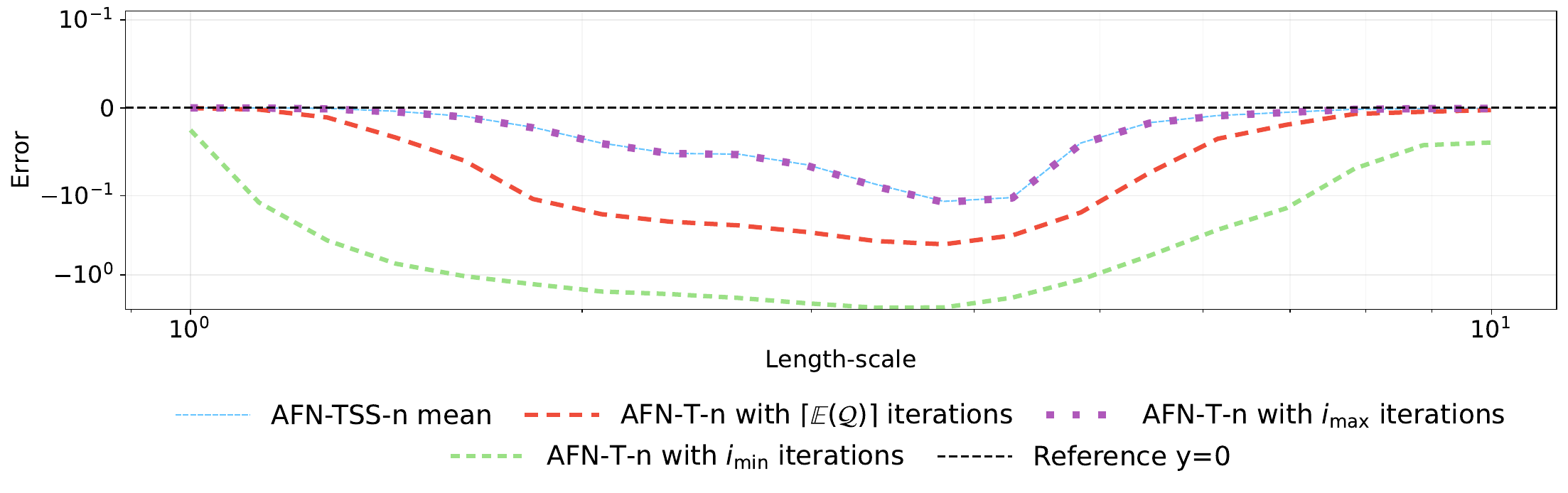}
    \caption{Signed mean error of the quadratic form estimation $\mathbf y^\top \widehat{\mathbf K}^{-1}\mathbf y$ versus length-scale $l$ (RBF kernel with $f=1.0$, $\mu=10^{-2}$). We use $\mathbb{P}({Q}=j)\propto e^{-0.5j}$ with $i_{\min}=5$ and $i_{\max}=10$ for TSS estimator, fix both the rank and Schur complement fill level for AFN to 32, and sample 10,000 times for each $l$ to report the mean. We also include the results for AFN-T-$n$ with $i_{\min}$, $i_{\max}$, and $\lceil\mathbb{E}({Q})\rceil=8$ iterations. Note that the y-axis uses a symmetric log scale to display both positive and negative errors, which is linear for values near zero.}
    \label{fig:quad comphensive}
\end{figure}

Figure~\ref{fig:quad comphensive} plots the signed error versus the length-scale $l$ defined as the difference between the estimation and the true value $\mathbf y^\top \widehat{\mathbf K}^{-1}\mathbf y$. The same probe vector y is used across all length scales. The results clearly demonstrate that all the deterministic truncated estimators are systematically biased and consistently underestimate the value. As expected, the error (bias) is most significant for the cheapest estimator with $i_{\min}$ iterations and remains large for the estimator with $\lceil\mathbb{E}({Q})\rceil$ iterations.
The AFN-TSS-$n$ estimator, whose computational cost is comparable to that of the truncated Lanczos method with $\lceil\mathbb{E}({Q})\rceil$ iterations, achieves mean errors close to those of the more expensive truncated Lanczos with $i_{\max}$ iterations across all $l$. This verifies the computational benefits of the TSS estimator.

\subsubsection{Preconditioned vs unpreconditioned}

In the next experiments, we examine how preconditioning changes the bias-variance trade-off, and how the sampling distribution $p_{\cdq}$ influences the performance of the estimator. 
We solve the same inverse quadratic form task with the RBF kernel as done in Section 5.1.1 and compare NP-TSS-$n$ with the AFN-preconditioned AFN-TSS-$n$ using the same setting.
In this test, we set the rank and Schur complement fill level to 64, and fix $i_{\min}=5$, $i_{\max}=15$.
We compare three different distributions: $\mathbb{P}({Q}=j)\propto e^{-0.5j}$, $\mathbb{P}({Q}=j)\propto 2^{-j}$, and the $\Gamma$-optimal distribution as described in \eqref{eq:pSolve-opt}.

\begin{figure}[htbp]
    \centering
    \includegraphics[width=0.95\linewidth]{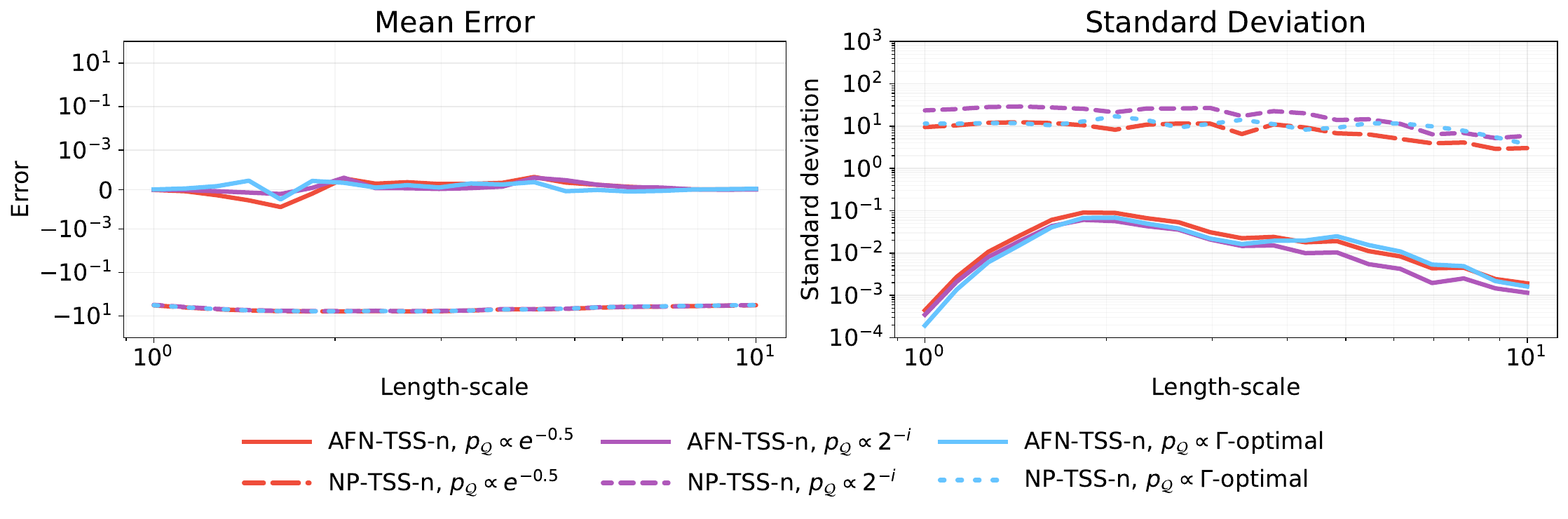}
    \caption{Signed mean error of the inverse quadratic form estimation $\mathbf y^\top \widehat{\mathbf K}^{-1}\mathbf y$ versus length-scale $l$ (RBF kernel with $f=1.0$, $\mu=10^{-2}$). The AFN-TSS-$n$ is compared with NP-TSS-$n$ with $\mathbb{P}({Q}=j)\propto e^{-0.5j}$, $\mathbb{P}({Q}=j)\propto 2^{-j}$, and the condition number-based distribution as in \eqref{eq:pSolve-opt} using $i_{\min}=5$ and $i_{\max}=15$. We fix both the rank and Schur complement fill level for AFN to 64, and sample 10,000 times for each $l$ to report the mean (left) and the standard deviation (right).}
    \label{fig: different distribution}
\end{figure}

Figure~\ref{fig: different distribution} plots the results of these experiments.
First, comparing AFN-TSS-$n$ with NP-TSS-$n$ shows clear benefits.
The preconditioned curves stay tightly centered around zero in the left panel, while the unpreconditioned ones display a persistent large negative bias.
At the same time, the right panel indicates that preconditioning reduces the standard deviation by orders of magnitude across all length-scales.


{Second, regarding the choice of distribution, the left panel shows that the mean error is relatively stable for each method across $l$. However, the right panel shows a much stronger sensitivity in variance without preconditioning. As discussed in Section 3.3, because the $\Gamma$-optimal distribution minimizes the theoretical variance upper bound rather than the exact empirical variance, it does not uniformly dominate the baseline choices across all length scales. Instead, its role here is primarily diagnostic. Specifically, the wide separation of the three NP-TSS-$n$ curves illustrates how sensitive the unpreconditioned estimator is to the truncation distribution. In contrast, the near coincidence of the three AFN-TSS-$n$ curves demonstrates that preconditioning effectively suppresses this sensitivity, substantially increasing TSS robustness.}

It is worth mentioning that for this application, the dominant cost per Krylov step is matrix-vector multiplication (MatVec). The cost of applying efficient near-linear complexity preconditioners, such as AFN, is typically substantially lower than that of a full MatVec, even when compared against structured methods such as hierarchical-matrix MatVecs \cite{smash,datadriven}. Due to the reduced iteration counts enabled by preconditioning, AFN-TSS-$n$ offers a favorable error-cost trade-off: it reaches high accuracy at a lower expected per-solve budget.

\subsubsection{Influence of reorthogonalization window size}

In Section~\ref{subsec:stable-precond-lanczos}, we demonstrated that full reorthogonalization improves both solution accuracy and eigenvalue estimation. We now examine its effect on the estimation variance in practical settings. In the following experiments, we evaluate the performance of the Lanczos method for inverse quadratic form and log-determinant estimation under different reorthogonalization window size $i_{\text{orth}}$. To isolate the impact of orthogonality loss, no preconditioner is used, and the iteration bounds are set to $i_{\min}=30$ and $i_{\max}=50$. Because the estimator is less accurate in this configuration, we reduce the dataset size to 1,024 points uniformly drawn from $[0,10]^3$. We continue to focus on the “middle-rank” regime, using $l\in[2,5]$ with $f=1.0$ and $\mu=0.01$. The sampling distribution is again chosen as $\mathbb{P}({Q}=j)\propto e^{-0.5j}$, and each experiment is repeated 1,000 times to estimate the standard deviation.

\begin{figure}[htbp]
    \centering
    \includegraphics[width=0.95\linewidth]{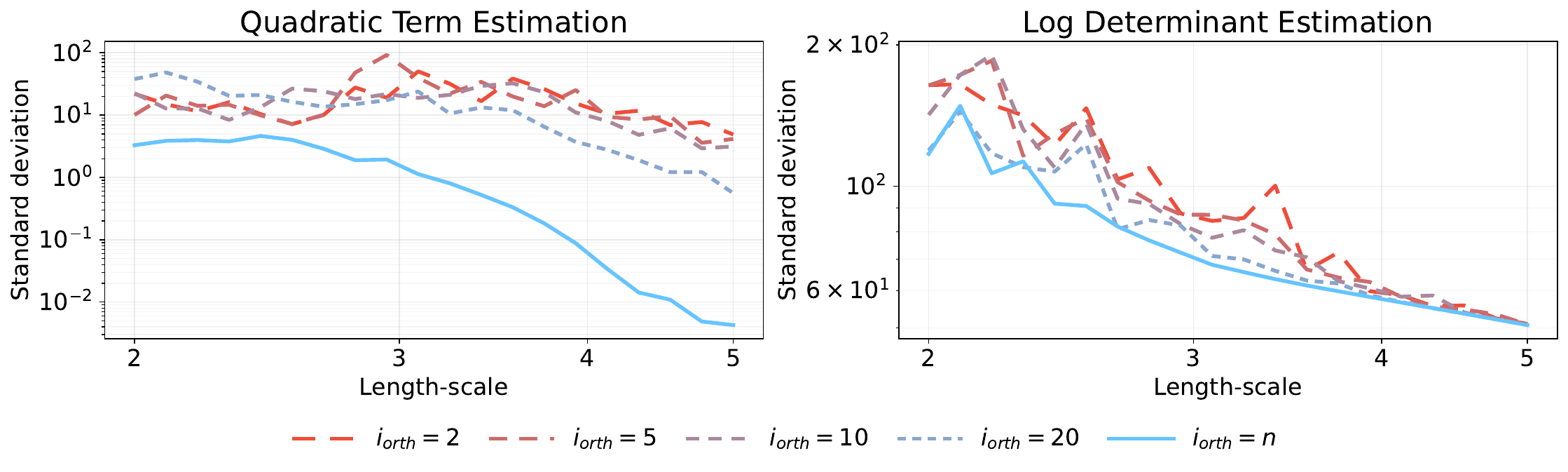}
    \caption{Standard deviation of the inverse quadratic form estimation $\mathbf y^\top \widehat{\mathbf K}^{-1}\mathbf y$ (left) and log-determinant estimation $\log|\widehat{\mathbf K}|$ (right) versus length-scale $l$ (RBF kernel with $f=1.0$, $\mu=10^{-2}$). The unpreconditioned TSS with $\mathbb{P}({Q}=j)\propto e^{-0.5j}$ is tested under different $i_{\text{orth}}$ using $i_{\min}=30$ and $i_{\max}=50$. We sample 1,000 times for each $l$ to report the standard deviation.}
    \label{fig: cg vs lanc variance}
\end{figure}

As shown in Figure~\ref{fig: cg vs lanc variance}, the standard deviation decreases noticeably once the reorthogonalization window becomes sufficiently large, with the most pronounced reduction occurring under full reorthogonalization. This behavior indicates that the loss of orthogonality increases estimator noise, whereas enlarging $i_{\text{orth}}$ stabilizes the Lanczos iterations and reduces variance---albeit at the expense of additional inner products and memory. 

\subsubsection{GP NLML}

In the previous sections, we evaluated the TSS estimator on isolated tasks involving inverse quadratic forms and log-determinants. We now extend the experimental analysis to the full GP NLML and its gradients using the RBF kernel. In this experiment, we employ RBF kernel matrices $\widehat{\mathbf{K}}$ with parameters $f=1.0$, $\mu=0.01$, and $l\in[1.0,10.0]$. Labels are generated as $\mathbf{y}\sim\mathcal{N}(0,\widehat{\mathbf{K}})$, and the loss is evaluated at $(f,l,\mu)=(1.0,l,0.01)$. The dataset consists of 4,096 points uniformly sampled from $[0,16]^3$. We use the sampling distribution $\mathbb{P}({Q}=j)\propto e^{-0.5j}$, set $i_{\min}=5$ and $i_{\max}=15$, and fix both the rank and Schur complement fill level in AFN to 64. We compare AFN-TSS-$n$ against the unpreconditioned NP-TSS-$n$ and the preconditioned AFN-T-$n$ using $i_{\min}$, $\lceil\mathbb{E}({Q})\rceil$, and $i_{\max}$ iterations. Because this experiment is more computationally intensive, each length-scale is averaged over 100 realizations, with a single probe vector used in SLQ per realization.

\begin{figure}[htbp]
    \centering
    \includegraphics[width=0.95\linewidth]{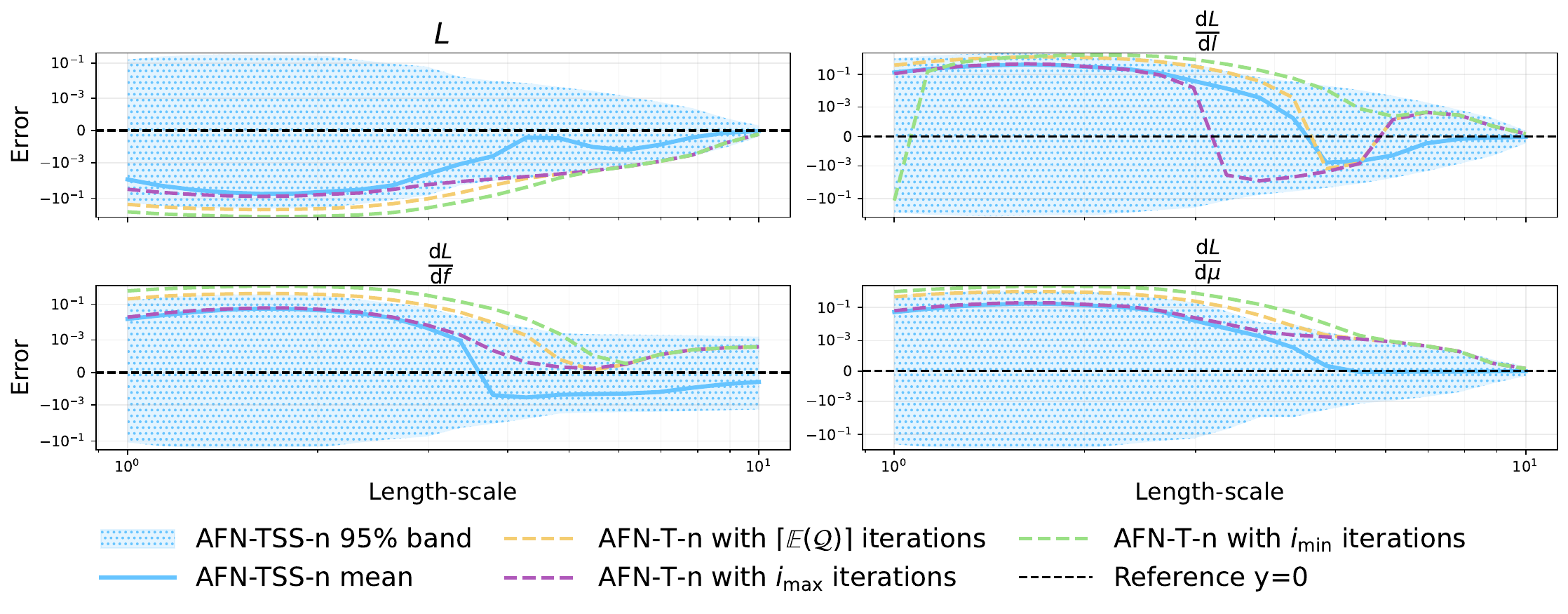}
    \caption{Signed mean error of the GP negative log marginal likelihood and its gradients versus length-scale $l$ (RBF kernel with $f=1.0$, $\mu=10^{-2}$). We use $\mathbb{P}({Q}=j)\propto e^{-0.5j}$ with $i_{\min}=5$ and $i_{\max}=15$ for TSS estimator, fix both the rank and Schur complement fill level for AFN to 64, and sample 100 times for each $l$ to report the mean and the 95\% band. The AFN-TSS-$n$ is compared with AFN-T-$n$ with $i_{\min}$, $i_{\max}$, and $\lceil\mathbb{E}({Q})\rceil=8$ iterations.}
    \label{fig:losses}
\end{figure}

\begin{figure}[htbp]
    \centering
    \includegraphics[width=0.95\linewidth]{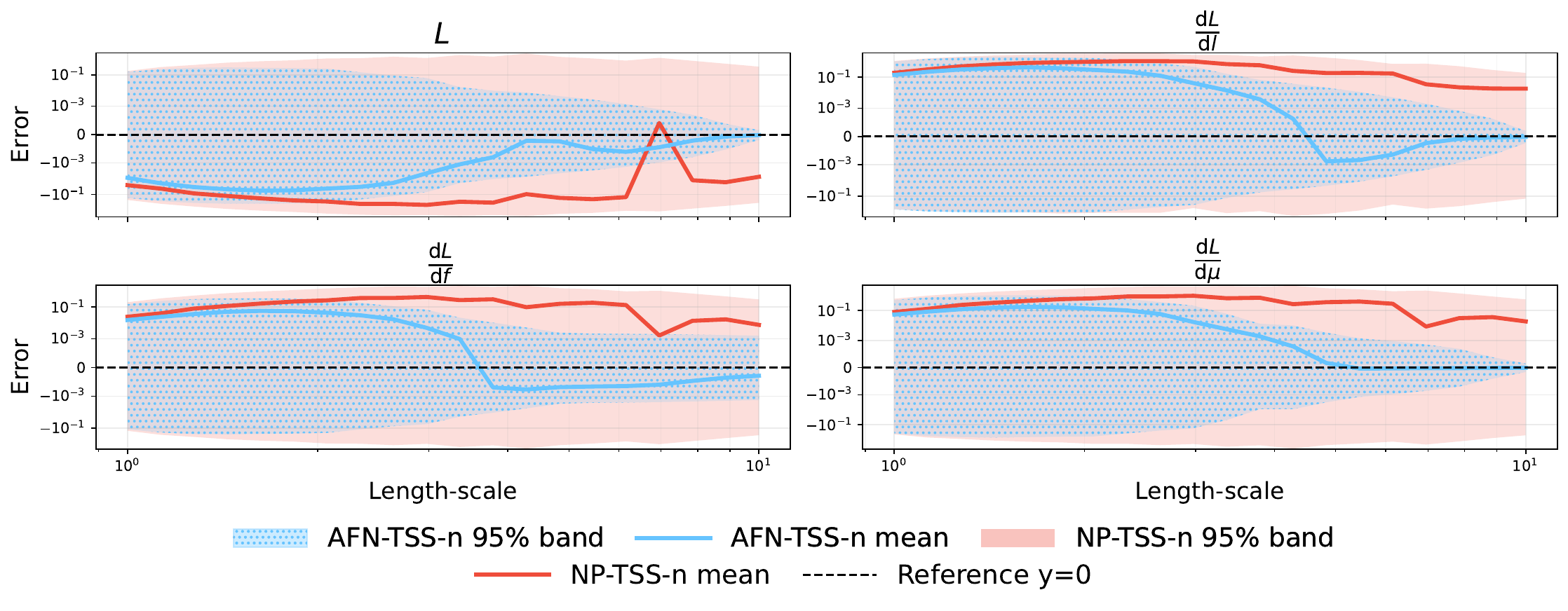}
    \caption{Signed mean error of the GP negative log marginal likelihood and its gradients versus length-scale $l$ (RBF kernel with $f=1.0$, $\mu=10^{-2}$). We use $\mathbb{P}({Q}=j)\propto e^{-0.5j}$ with $i_{\min}=5$ and $i_{\max}=15$, fix both the rank and Schur complement fill level for AFN to 64, and sample 100 times for each $l$ to report the mean and the 95\% band. The AFN-TSS-$n$ is compared with NP-TSS-$n$.}
    \label{fig:losses tss}
\end{figure}

Figure~\ref{fig:losses} and Figure~\ref{fig:losses tss} present results for the GP NLML loss $L$ and its gradients with respect to ${f,l,\mu}$. All quantities are normalized by $n$ to eliminate the trivial linear scaling with dataset size. The trends observed here are consistent with those from the earlier component tests. From Figure~\ref{fig:losses} we see that under a matched iteration budget ($\lceil \mathbb{E}({Q})\rceil$), truncated estimators exhibit a noticeably larger bias in both $L$ and its gradients across most 
length-scales, whereas the TSS estimator achieves mean errors comparable to the more expensive truncated baseline at $i_{\max}$. Figure~\ref{fig:losses tss} further shows that AFN preconditioning suppresses mean errors and significantly tightens the uncertainty bands, particularly for moderate to large length-scales.

\subsection{GP Training}
In the next set of experiments, we evaluate the performance of TSS within the GP hyperparameter-learning loop. 
To visualize the optimization trajectory against the exact NLML contours---a task computationally prohibitive for large-scale datasets---we employ a small dataset of $n=200$ points uniformly sampled from $[0,\sqrt[3]{200}]^3$, allowing the exact NLML contours to be visualized. 
In this tractable regime, we utilize the unpreconditioned NP-TSS-$n$ as a proxy for the preconditioned estimator on large-scale tasks, since preconditioning on large-scale benchmarks is expected to yield spectral properties similar to this naturally better-conditioned small-scale benchmark.
The labels are generated from a Gaussian random field with the RBF kernel and parameters $(f,l,\mu)=(1.0,2.0,0.5)$. Optimization is performed in the unconstrained variables $(\tilde l,\tilde\mu)$, which are mapped to the positive domain via the $\mathrm{softplus}$ transformation, i.e., $(l,\mu)=\mathrm{softplus}(\tilde l,\tilde\mu)$ with $f$ fixed at $1.0$. The initial parameters are set to $(1.0,1.0)$. For TSS, we use the sampling distribution $p(Q=j)\propto e^{-0.5j}$ with $i_{\min}=5$ and $i_{\max}=15$, and employ a single probe vector for both TSS and SLQ. We compare NP-TSS-$n$ against the exact GP and the truncated NP-T-$n$ using $i_{\min}$, $\lceil \mathbb E(Q)\rceil$, and $i_{\max}$ iterations. Results are reported over 3,000 gradient descent iterations with a learning rate of $0.1$.

\begin{figure}[htbp]
    \centering
    \includegraphics[width=0.95\linewidth]{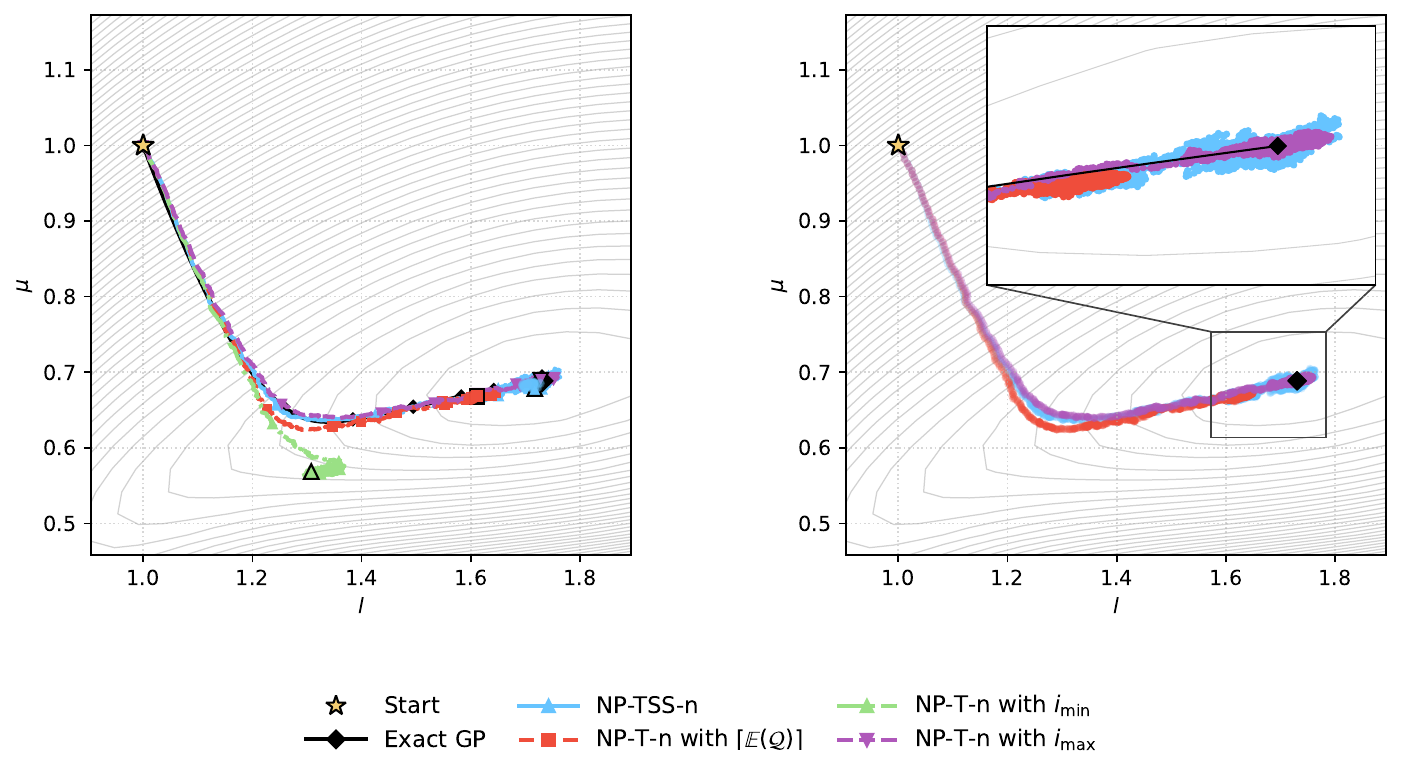}
    \caption{Optimization trajectory in $(l,\mu)$ space for GP hyperparameters with Gaussian Random Field generated with RBF kernel matrix using $(f,l,\mu)=(1.0,2.0,0.5)$ and $200$ points generated in $[0,\sqrt[3]{200}]^3$. We use $\mathbb{P}({Q}=j)\propto e^{-0.5j}$ with $i_{\min}=5$ and $i_{\max}=15$ for TSS estimator, and use a single sample for TSS and SLQ. The NP-TSS-$n$ is compared with exact GP and NP-T-$n$ with $i_{\min}$, $i_{\max}$, and $\lceil\mathbb{E}({Q})\rceil=8$ iterations. We plot the overall trajectory (left) and highlight the local oscillation behavior (right).}
    \label{fig:training}
\end{figure}

As shown in Figure~\ref{fig:training}, the 3,000 gradient descent iterations are sufficient to ensure that all methods reached a stagnation phase under this fixed learning rate.
While the truncated NP-T-$n$ with $i_{\min}$ stops far from the target, increasing the budget to $\lceil\mathbb{E}({Q})\rceil$ improves the path and lands much closer to the optimum.
However, as the zoom inset in the right panel reveals, the method eventually stalls at a sub-optimal location. 
The trajectory cloud demonstrates that the estimator has settled into a stationary distribution distinct from the true minimum, confirming that the remaining error is due to intrinsic bias rather than insufficient iterations.
Using truncated NP-T-$n$ with $i_{\max}$ yields a trajectory that is closer to the optimum, but at a higher cost. 
In contrast, with an expected cost comparable to the $\lceil\mathbb{E}({Q})\rceil$ baseline, NP-TSS-$n$ tracks the exact GP trajectory closely and its final iterate falls in the same basin, clearly demonstrating the advantage of TSS over simple truncation. 
Because we use a single probe, the TSS path shows mild jitter near the end, but the overall direction and endpoint alignment are preserved.

In our last experiment, we evaluate end-to-end optimization over all three hyperparameters on both a real dataset (the Bike dataset from the UCI repository \cite{asuncion2007uci}) and a widely used synthetic benchmark (the 2D Franke function with points sampled uniformly in $[0,1]^2$ \cite{franke1979critical}). In both tests, we set the dataset size to $n=4,096$. For the Bike dataset, we apply per-feature $z$-score normalization and subsample 4,096 points, and use RBF kernel for GP. For the Franke dataset, we generate labels using the \texttt{franke} function in \texttt{MATLAB} and add Gaussian noise $\mathcal N(0,\,0.1^2)$, and use Mat\'ern~3/2 kernel for GP. We optimize in unconstrained variables $(\tilde f,\tilde l,\tilde\mu)$ with $\mathrm{softplus}$ mapping, and initialized $(\tilde f,\tilde l,\tilde\mu)$ at $(0,0,0)$ and run 1,000 full-batch Adam iterations with learning rate $0.01$. For TSS we use $p(Q=j)\propto e^{-0.5j}$ with $i_{\min}=5$ and $i_{\max}=15$ and a single SLQ probe per step; for AFN we set the rank and Schur complement fill level to $64$. We compare NP-TSS-$n$ and AFN-TSS-$n$ against the exact GP, and plot the resulting trajectories in $(f,l,\mu)$.

\begin{figure}[htbp]
    \centering
    \includegraphics[width=0.95\linewidth]{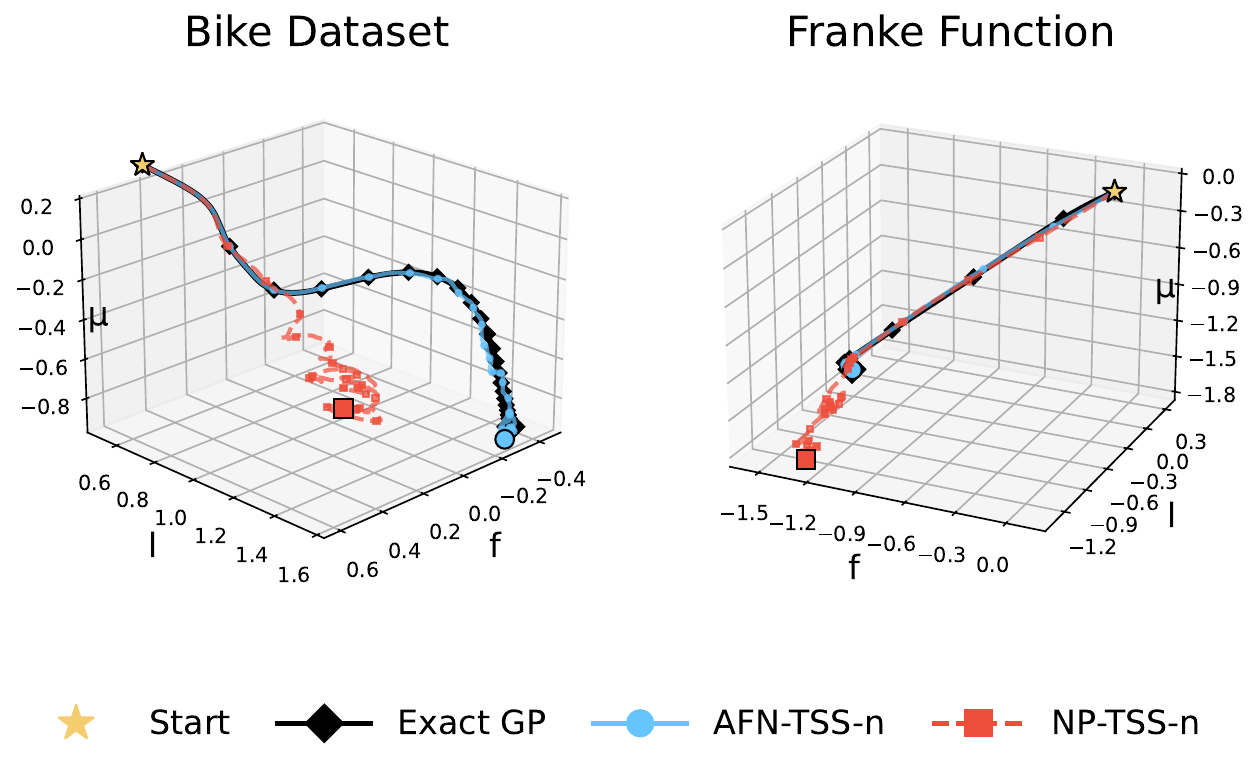}
    \caption{Optimization trajectories in $(f,l,\mu)$ for GP hyperparameters using dataset with $n=4,096$ samples. We use $\mathbb{P}({Q}=j)\propto e^{-0.5j}$ with $i_{\min}=5$ and $i_{\max}=15$ for TSS estimator, and set the rank as well as Schur complement fill level for AFN to 64. \textit{Left:} Subsampled bike dataset with RBF kernel; \textit{Right:} 2D Franke function using data generated in $[0,1]^2$ with Mat\'ern~3/2 kernel.}
    \label{fig:training complete}
\end{figure}

Figure~\ref{fig:training complete} plots the optimization trajectories in $(f,l,\mu)$
for the Bike and Franke cases. In both tests, AFN-TSS-$n$
closely follows the exact GP path, and its final iterate lands in the same basin.
Without preconditioning, NP-TSS-$n$ produces noisier trajectories and often
gets stuck in a nearby but different region. This shows the benefit of the
preconditioned TSS estimator under a matched iteration budget.

\section{Conclusion}\label{sec: conclusion}

We introduced the {Preconditioned Truncated Single-Sample} (PTSS) estimators, a class of stochastic Krylov methods that combine preconditioning with randomized truncation to achieve low-variance and numerically stable estimations of linear solves, log-determinants, and their derivatives. Theoretical analysis established explicit results on the mean, variance, and concentration of PTSS, revealing how preconditioning reduces variance and stabilizes Krylov recurrences in finite precision. Numerical experiments confirmed that PTSS achieves superior accuracy, stability, and computational efficiency compared with existing unbiased and biased alternatives. Together, these results demonstrate that preconditioning and stochastic truncation can be synergistically combined to enable scalable and reliable estimators for large-scale stochastic optimization and Bayesian learning. {The practical benefit of PTSS depends on the preconditioning regime. When preconditioning is near-perfect, a small fixed number of Krylov iterations can already achieve near machine precision, and stochastic truncation offers limited advantage. When preconditioning is weak, convergence is slow and orthogonality/spectral issues can dominate, making variance and stability challenging regardless of truncation strategy. PTSS is designed for the common intermediate ``good but not great'' regime: preconditioning compresses the spectrum enough that a modest truncation depth $i_{\max}$ provides acceptable accuracy, yet running to tight tolerances at every outer iteration remains too expensive and loss of orthogonality can degrade quadrature-based estimates.} Future work will extend the framework to block variants and explore adaptive strategies for dynamic truncation and preconditioner selection.

\appendix

\section{CG for Eigenvalue Estimation}\label{sec: cg eig est}

The basic CG algorithm, as shown in Algorithm~\ref{alg:CG}, can be used to estimate the tridiagonal matrix $\mathbf{T}_m$ of its underlying Lanczos algorithm after $m$-steps. 

\begin{algorithm}[htbp]
\caption{Conjugate Gradient\label{alg:CG}}
\begin{algorithmic}[1]
\STATE{\bf input:} ${\mathbf{A}},\ \mathbf{y}, \ \mathbf{x}_0, \ m$
\STATE{\bf output:} approximate solution $\mathbf{x}_m$
\STATE$\triangleright$ Compute residual $\mathbf{r}_0=\mathbf{y}-{\mathbf{A}}\mathbf{x}_0$.
\STATE$\triangleright$ Set $\mathbf{p}_0=\mathbf{r}_0$
\FOR{$j=0,1,\ldots,m-1$}
\STATE$\triangleright$ Compute $\alpha_j=\frac{\mathbf{r}_j^\top\mathbf{r}_j}{\mathbf{p}_j^\top{\mathbf{A}}\mathbf{p}_j}$.
\STATE$\triangleright$ Compute $\mathbf{x}_{j+1}=\mathbf{x}_j+\alpha_j\mathbf{p}_j$.
\STATE$\triangleright$ Compute $\mathbf{r}_{j+1}=\mathbf{r}_j-\alpha_j{\mathbf{A}}\mathbf{p}_j$.
\STATE$\triangleright$ Compute $\beta_j=\frac{\mathbf{r}_{j+1}^\top\mathbf{r}_{j+1}}{\mathbf{r}_j^\top\mathbf{r}_j}$.
\STATE$\triangleright$ Compute $\mathbf{p}_{j+1}=\mathbf{r}_{j+1}+\beta_j\mathbf{p}_j$.
\ENDFOR
\STATE$\triangleright$ Return $\mathbf{x}_m$.
\end{algorithmic}
\end{algorithm}

If we store all the $\alpha$s and $\beta$s generated from each step of CG, it is known that $\mathbf{T}_m$ is given by:
\begin{equation}
    \mathbf{T}_m=
    \begin{pmatrix}
        \frac{1}{\alpha_0} & \frac{\sqrt{\beta_0}}{\alpha_0} & & & \\
        \frac{\sqrt{\beta_0}}{\alpha_0} & \frac{1}{\alpha_1}+\frac{\beta_0}{\alpha_0} & \frac{\sqrt{\beta_1}}{\alpha_1} & & \\
         & \ddots & \ddots & \ddots & \\
         & &\frac{\sqrt{\beta_{m-3}}}{\alpha_{m-3}} & \frac{1}{\alpha_{m-2}}+\frac{\beta_{m-3}}{\alpha_{m-3}}& \frac{\sqrt{\beta_{m-2}}}{\alpha_{m-2}} \\
         & & & \frac{\sqrt{\beta_{m-2}}}{\alpha_{m-2}} & \frac{1}{\alpha_{m-1}}+\frac{\beta_{m-2}}{\alpha_{m-2}} \\
    \end{pmatrix}.
\end{equation}

\section*{Acknowledgments}

The authors are very grateful to the two anonymous referees for their valuable suggestions.

\bibliographystyle{siamplain}
\bibliography{papers}

\end{document}